\documentclass[a4paper,12pt]{amsart}

\usepackage{amsmath, amssymb, amsthm, gensymb}
\usepackage{hyperref}
\usepackage{pgfplots, tikz, tikz-cd}
\usepackage{xfrac, relsize}
\usepackage{footnote}
\usepackage{subcaption}
\usepackage{enumitem}
\usepackage[utf8]{inputenc}
\usepackage{mathtools}
\usepackage{epigraph}
\usepackage{subcaption}
\usepackage{stmaryrd}
\usepackage{float}
\usepackage{mathrsfs}
\usepackage{graphicx}
\usepackage{import}
\usepackage{relsize}
\usepackage{leftindex}
\usepackage{calligra}
\usepackage{comment}
\usepackage[T1]{fontenc}
\usepackage{lmodern}
\usepackage{microtype}\DeclareMicrotypeAlias{ppl}{pplx}
\usepackage{todonotes}
\usepackage[backend=bibtex, style=alphabetic]{biblatex}
\calclayout

\hypersetup{
  colorlinks,
  citecolor = blue,
  urlcolor  = blue,
  linkcolor = blue,
}

\pgfplotsset{compat=1.16}
\usetikzlibrary{calc, arrows}

\newtheorem{definition}{Definition}[section]
\newtheorem*{question}{Question}
\newtheorem*{acknowledgements}{Acknowledgements}

\newtheorem{example}[definition]{Example}
\newtheorem{theorem}[definition]{Theorem}
\newtheorem*{theorem2}{Theorem}
\newtheorem{proposition}[definition]{Proposition}
\newtheorem{corollary}[definition]{Corollary}
\newtheorem{lemma}[definition]{Lemma}
\newtheorem{remark}[definition]{Remark}

\numberwithin{equation}{section}

\newtheorem{innercustomthm}{}
\newenvironment{customthm}[1]
  {\renewcommand\theinnercustomthm{#1}\innercustomthm}
  {\endinnercustomthm}

\newcommand{\defeq}{\vcentcolon=}

\newcommand{\restr}[2]{\ensuremath{\left.#1\right|_{#2}}}

\newcommand{\cat}[1]{\textbf{\scshape{#1}}}

\DeclareMathOperator{\im}{\operatorname{im}}

\DeclareMathOperator{\Spec}{Spec}
\DeclareMathOperator{\Proj}{Proj}

\newcommand*{\sheafext}{\mathcal{E}\text{\kern -2pt xt}}
\renewcommand{\leq}{\leqslant}
\renewcommand{\geq}{\geqslant}

\renewcommand{\chi}{\ensuremath\raisebox{\depth}{$\mathchar"11F$}}

\renewcommand{\_}{\rule{0.25cm}{0.4pt}}

\title[Finite group schemes as fundamental group schemes]{Finite group schemes as fundamental group schemes of smooth projective varieties}
\author{Gabriel Bassan}

\thanks{Sorbonne Université and Université Paris Cité, CNRS, IMJ-PRG, F-75005 Paris, France. 
E-mail: \texttt{bassan@imj-prg.fr}.}

\subjclass[2020]{Primary 14F35; Secondary 14L15, 14L30, 14M10.}
\keywords{Nori fundamental group scheme, S-fundamental group scheme,
finite group schemes, Godeaux--Serre construction, Bertini theorem,
complete intersections.}
\date{August 27, 2026}

\begin{document}

\begin{abstract}
In this paper we study the problem of realizing finite group schemes as fundamental group schemes of smooth projective varieties. We establish Bertini-type results and prove the triviality of fundamental group schemes of mildly singular complete intersections in projective space. With this in hand, we are able to go through a Godeaux--Serre construction and prove that, over an infinite perfect field $k$, any finite group scheme $G/k$ can be realized as the $S$-, extended Nori and Nori fundamental group schemes of a connected smooth projective variety of any dimension at least $\dim \text{Lie}(G)+2$.
\end{abstract}

\maketitle

\section*{Introduction}
A classical problem in geometry and topology is to determine which groups can appear as fundamental groups of spaces. Of course, the problem only becomes interesting when one imposes conditions on the spaces allowed. One classical condition is to look at fundamental groups of compact Kähler manifolds or smooth complex projective varieties. This question has spawned a lot of work and one can show that these groups satisfy strong geometric and representation-theoretic properties, but a complete classification is still out of reach (\cite{Amoros1996-im},\cite{Py25}). However, for the finite case it turns out there is no obstruction. In fact, we have the following theorem.

\begin{theorem2}\cite[Proposition 15]{Serre1958Topology}
Let $G$ be a finite group and $n\geq 1$ an integer. There exists a smooth connected complex projective variety of dimension $n$ with fundamental group isomorphic to $G$.
\end{theorem2}

The idea behind this is that of the Godeaux--Serre construction. One starts with a faithful representation $V$ of $G$ such that the non-free locus of the action of $G$ in $\mathbb{P}(V)$ has large codimension. By taking a suitable invariant smooth complete intersection contained in the free locus with the help of Bertini's theorem, one passes to the quotient and, through a short exact sequence relating the fundamental groups of a variety and its quotient by $G$, obtains a variety with fundamental group $G$.
\\
\\
One can ask the same question for algebraic versions of the fundamental group. For the étale fundamental group, over an algebraically closed field, the same Godeaux--Serre construction proves that any finite group is the étale fundamental group of a smooth projective variety over an algebraically closed field. For non-algebraically closed fields, there is also the input coming from the absolute Galois group of the field, but nonetheless, by modifying the Godeaux--Serre construction, one can prove that any continuous extension of the absolute Galois group by a finite group appears as the étale fundamental group of a smooth projective variety (see \cite{Rungtanapirom2018}).
\\
\\
Going further, one can ask the analogous question for Tannakian fundamental group schemes. The following question, due to V. B. Mehta, was communicated to the author by João Pedro dos Santos and is also recorded in the recent preprint of Li and Wang \cite{li2026varietiesprescribedfundamentalgroup}.

\begin{question}
Let $k$ be a field and $G/k$ be a finite group scheme. Is there a smooth connected projective variety $X/k$ such that its Tannakian group scheme (for some suitable choice of Tannakian category associated to $X$) $\pi_{1}^{\ast}(X,x)$ is isomorphic to $G$?
\end{question}

There is more than one choice for Tannakian fundamental group. Two natural choices are the Nori fundamental group (\cite{Nori}, \cite{Nori1982-tm}) and the $S$-fundamental group (\cite{AIF_2011__61_5_2077_0},\cite{Langer2012-ap}). Recently, in \cite{li2026varietiesprescribedfundamentalgroup}, Li and Wang settled this realization problem in the finite étale case by establishing a short exact sequence of fundamental groups for $G$-torsors and then using the same kind of Godeaux--Serre strategy. The purpose of this paper is to treat the general finite case. Our main result is the following.

\begin{customthm}{Theorem A}[Theorem \ref{fundamental group}]\label{A}
      Let $k$ be an infinite perfect field, $G/k$ a finite group scheme and $n\geq \dim(\text{Lie}(G))+2$ an integer. Then, there exists a connected smooth projective variety $X/k$ of dimension $n$ and $x\in X(k)$ such that
      \[
        \pi_{1}^{S}(X,x)\cong\pi_{1}^{EN}(X,x)\cong\pi_{1}^{N}(X,x)\cong G.
      \]
\end{customthm}

The main difficulty in the non-étale case is that, in contrast to the étale case, for a finite group scheme $G/k$ acting on projective space, one cannot find an invariant smooth complete intersection in which $G$ acts freely in general. Our strategy will be to instead find a singular invariant complete intersection with controlled singularities and extend the Godeaux--Serre construction in this case.
\\
\\
The first input we will need is a Bertini-type theorem which measures the singularities of a general complete intersection.

\begin{customthm}{Theorem B}[Theorem \ref{Bertini}]\label{B}
    Let $k$ be a field. Let $X/k$ be a smooth scheme over $k$ of pure dimension $n$ and $1\leq c\leq n$ be an integer. Let $L$ be a line bundle over $X$, $V$ be a $k$-vector space, $\varphi\colon V\to H^{0}(X,L)$ be a $k$-linear map and consider the $\mathcal{O}_{X}$-linear map
    \[
    p^{1}_{V}\colon V\otimes_{k}\mathcal{O}_{X}\to P^{1}(L).
    \]
    Suppose that
    \begin{enumerate}
        \item The image of $\varphi$ generates $L$;
        \item $p^{1}_{V}$ has constant rank $r>c$;
    \end{enumerate}
    Then for a generic $v\in V^{c}$, the zero locus $V(\varphi(v_{1}),\dots, \varphi(v_{c}))$ has dimension $n-c$ and its singular locus has dimension at most $n-r$.
\end{customthm}

The second input needed is that under mildly singular conditions, a complete intersection in projective space has trivial fundamental group. More precisely, we have the following.

\begin{customthm}{Theorem C}[Propositions \ref{intersecao completa normal é Nori simplesmente conexa} and \ref{intersecao completa normal é S simplesmente conexa}]\label{C}
    Let $k$ be a perfect field of characteristic $p>0$ and $n\geq 3$. Let $i\colon X=V(f_{1},\dots,f_{c})\hookrightarrow \mathbb{P}^{n}$ be a regular in codimension $2$ complete intersection of dimension $\geq 2$ and $x_{0}\in X(k)$. Then $\pi_{1}^{N}(X,x_{0})=\pi_{1}^{EN}(X,x_{0})=\pi_{1}^{S}(X,x_{0})=1$.
\end{customthm}

\subsection*{Organization of the paper}
\begin{itemize}
    \item In Section \ref{1} we go through preliminaries. In \ref{1.1} we recall some notions on reflexive sheaves. In \ref{1.2} we briefly go through local cohomology. In \ref{1.3} we recall some properties of complete intersections in projective space.
    \item Section \ref{2} contains the backbone of this manuscript. We prove some Bertini-type statements, in particular \ref{B}, which will allow us to refine the Godeaux--Serre construction in Section \ref{4}.
    \item Section \ref{3} is devoted to proving the triviality of fundamental groups of mildly singular complete intersections in projective space (\ref{C}). This section is where most of the preliminary notions introduced are used.
    \item In the final section, Section \ref{4}, we go through the Godeaux--Serre construction and prove the main result of the paper (\ref{A}).
\end{itemize}

\begin{acknowledgements}
I would like to thank my advisors Mathieu Florence and João Pedro dos Santos for their guidance and many helpful discussions. I am especially grateful to João Pedro dos Santos for bringing to my attention the question of V. B. Mehta that motivated this work.
\end{acknowledgements}

\section*{Notation and conventions}
All schemes are assumed to be quasi-compact and separated. A variety is a reduced scheme of finite type. For a scheme $X$ we denote its regular locus by $X_{\text{reg}}$ and its singular locus by $\text{Sing}(X)$.
\\
For a finite dimensional vector space $V$, we will denote its associated affine scheme $\Spec \text{Sym}(V^{\vee})$ simply by $V$. We also write $P(V)\defeq \Proj \text{Sym}(V^{\vee})$ for its projectivization. Given a line bundle $L$ on a scheme $X/k$, we denote its sheaf of order $1$ principal parts by $P^{1}_{X/k}(L)$ with the universal order $1$ differential operator
\[
p^{1}\colon L\to P^{1}_{X/k}(L).
\]
\\
If $G/k$ is a group scheme, we denote its Lie algebra by $\text{Lie}(G)$. By a $G$-torsor we always mean a fppf-torsor.
\section{Preliminaries}\label{1}
Before going into the main body of the manuscript, let me start by recalling some preliminary notions we will need.

\subsection{Reflexive sheaves on normal schemes}\label{1.1}

\begin{definition}
    Let $X$ be a noetherian scheme. A coherent module $\mathcal{F}$ is said to be reflexive if the natural morphism
    \[
    \mathcal{F}\to \mathcal{F}^{\vee\vee}
    \]
    to its double dual is an isomorphism.
\end{definition}

\begin{example}
    Since $\mathcal{O}_{X}$ is obviously reflexive, every locally free sheaf of finite rank is reflexive.
\end{example}

\begin{proposition}\label{propriedades feixes reflexivos}
    Let $X$ be an integral noetherian scheme.
    \begin{enumerate}
        \item The kernel of a morphism between reflexive sheaves is reflexive.
        \item Let $j\colon U\subseteq X$ be an open subscheme such that $\text{depth}(\mathcal{O}_{X,x})\geq 2$ for every $x\in X\setminus U$. Then $j_{\ast}$ and $j^{\ast}$ induce an equivalence between the categories of reflexive modules on $X$ and reflexive modules on $U$.
    \end{enumerate}
\end{proposition}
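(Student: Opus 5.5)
Both parts are local statements, so I plan to check them on stalks (for (1)) and on affine opens (for (2)). The tool I will rely on throughout is the standard characterization of reflexivity over an integral noetherian scheme: a coherent $\mathcal{F}$ is reflexive if and only if it is locally a second syzygy, i.e.\ it fits locally into an exact sequence $0\to\mathcal{F}\to\mathcal{E}_{0}\to\mathcal{E}_{1}$ with $\mathcal{E}_{i}$ locally free. If that characterization needs a normality hypothesis, I will instead argue directly with duals as follows.

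\textbf{Part (1).} Let $\phi\colon\mathcal{F}\to\mathcal{G}$ be a morphism of reflexive sheaves and $\mathcal{K}=\ker\phi$. I first note that $\mathcal{K}$ is torsion-free, being a subsheaf of the torsion-free sheaf $\mathcal{F}\cong\mathcal{F}^{\vee\vee}$ (duals are always torsion-free on an integral scheme). Hence $\mathcal{K}\to\mathcal{K}^{\vee\vee}$ is injective, and it remains to prove surjectivity. Naturality gives a commutative square with $\mathcal{K}^{\vee\vee}\to\mathcal{F}^{\vee\vee}\cong\mathcal{F}$. The composite $\mathcal{K}^{\vee\vee}\to\mathcal{F}\to\mathcal{G}$ equals $\phi^{\vee\vee}$ restricted to $\mathcal{K}^{\vee\vee}$. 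It vanishes at the generic point, because there $\mathcal{K}^{\vee\vee}_{\eta}=\mathcal{K}_{\eta}$. Since $\mathcal{G}$ is torsion-free, the composite vanishes identically. It remains to see that $\mathcal{K}^{\vee\vee}\to\mathcal{F}$ is injective; this holds because both sheaves are torsion-free and the map is generically the inclusion $\mathcal{K}_{\eta}\subseteq\mathcal{F}_{\eta}$. So $\mathcal{K}^{\vee\vee}$ lands inside $\mathcal{K}$, and we get $\mathcal{K}=\mathcal{K}^{\vee\vee}$.

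\textbf{Part (2).} Set $Z=X\setminus U$. The key input is that reflexive sheaves satisfy Hartogs-type extension across $Z$: for reflexive $\mathcal{F}$, the map $\mathcal{F}\to j_{\ast}j^{\ast}\mathcal{F}$ is an isomorphism.
\begin{enumerate}
    \item First I prove this for locally free sheaves. This reduces to $\mathcal{O}_{X}$, where it is the statement $H^{0}_{Z}(\mathcal{O}_{X})=H^{1}_{Z}(\mathcal{O}_{X})=0$. By the local-cohomology/depth criterion (Grothendieck), this vanishing is equivalent to $\text{depth}\,\mathcal{O}_{X,x}\geq 2$ for all $x\in Z$, which is exactly the hypothesis.
    \item Next I pass to reflexive $\mathcal{F}$. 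Locally I write $\mathcal{F}^{\vee}$ as a quotient $\mathcal{O}^{m}\twoheadrightarrow\mathcal{F}^{\vee}$. Dualizing gives $\mathcal{F}=\mathcal{F}^{\vee\vee}=\ker(\mathcal{O}^{m}\to\mathcal{O}^{m'})$, using a presentation $\mathcal{O}^{m'}\to\mathcal{O}^{m}\to\mathcal{F}^{\vee}\to0$. Since $j_{\ast}$ is left exact, the five lemma together with the locally free case yields $\mathcal{F}\cong j_{\ast}j^{\ast}\mathcal{F}$.
\end{enumerate}

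For essential surjectivity, take a reflexive $\mathcal{G}$ on $U$. I extend it to some coherent $\mathcal{G}'$ on $X$ (coherent extension over noetherian schemes) and set $\mathcal{F}=(\mathcal{G}')^{\vee\vee}$. Then $\mathcal{F}$ is reflexive, since a dual is always reflexive. Moreover $j^{\ast}\mathcal{F}=\mathcal{G}^{\vee\vee}=\mathcal{G}$, because taking duals commutes with restriction to opens. Once this holds, $j_{\ast}\mathcal{G}\cong\mathcal{F}$ is coherent and reflexive. Full faithfulness follows from $\mathrm{Hom}(\mathcal{F},\mathcal{F}')=\mathrm{Hom}(\mathcal{F},j_{\ast}j^{\ast}\mathcal{F}')=\mathrm{Hom}(j^{\ast}\mathcal{F},j^{\ast}\mathcal{F}')$. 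Since $j^{\ast}j_{\ast}=\mathrm{id}$, the functors $j^{\ast}$ and $j_{\ast}$ are quasi-inverse.

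The main obstacle is the extension step for $\mathcal{O}_{X}$ via local cohomology and depth. There one must check that the depth condition imposed only at points of $Z$ suffices for the vanishing of $H^{0}_{Z}$ and $H^{1}_{Z}$. I would cite the standard result that $H^{i}_{Z}(\mathcal{M})=0$ for $i<\inf_{x\in Z}\text{depth}\,\mathcal{M}_{x}$, presumably recalled in \ref{1.2}.
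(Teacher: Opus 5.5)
Your proof is correct. The paper only cites Stacks Project Tags 0EBG and 0EBJ, and your argument is essentially the standard proof behind those citations: torsion-freeness plus a generic-point comparison for (1), and for (2) Hartogs extension from the depth criterion, extension of coherent sheaves, and the reflexive hull. The one difference is cosmetic: you transfer the extension property from $\mathcal{O}_X$ to $\mathcal{F}$ by left exactness of $j_{\ast}j^{\ast}$ applied to $0\to\mathcal{F}\to\mathcal{O}^{m}\to\mathcal{O}^{m'}$, whereas the usual route uses the same sequence with the depth lemma to get $\text{depth}\,\mathcal{F}_{z}\geq 2$ and then applies the local cohomology criterion to $\mathcal{F}$ directly.

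You leave one claim unjustified: that $(\mathcal{G}')^{\vee\vee}$ is reflexive. It follows at once from your part (1), since locally a dual is the kernel of the dual of a finite presentation, i.e.\ of a map between free sheaves.
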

\begin{proof}
    The first statement is \cite[\href{https://stacks.math.columbia.edu/tag/0EBG}{Tag 0EBG}]{stacks-project} and the second one is \cite[\href{https://stacks.math.columbia.edu/tag/0EBJ}{Tag 0EBJ}]{stacks-project}. See also \cite[Section 1]{Hartshorne1980-ps}.
\end{proof}

\subsection{Local cohomology}\label{1.2}

\begin{definition}
    Let $X$ be a noetherian scheme and $i\colon Z\hookrightarrow X$ a closed subscheme. Consider the additive and left exact functor sending an $\mathcal{O}_{X}$-module $\mathcal{F}$ to
    \[
    \Gamma_{Z}(X,\mathcal{F})\defeq \{s\in \Gamma(X,\mathcal{F}) \mid \text{Supp}(s)\subseteq Z\}.
    \]
    We define the local cohomology groups $H^{i}_{Z}(X,\_)$ as the $i$-th derived functors of $\Gamma_{Z}(X,\_)$.
\end{definition}

\begin{proposition}\label{propriedades cohomologia local}
    Let $X$ be a noetherian scheme, $Z\subseteq X$ a closed subscheme and $U\defeq X\setminus Z$. Let $\mathcal{F}$ be a coherent module.
    \begin{enumerate}
        \item We have a long exact sequence
        \[
        0\to H^{0}_{Z}(X,\mathcal{F})\to H^{0}(X,\mathcal{F})\to H^{0}(U,\mathcal{F})\to H^{1}_{Z}(X,\mathcal{F})\to\cdots .
        \]
        \item $H^{i}_{Z}(X,\mathcal{F})=0$ for all $i<n$ if and only if $\text{depth}(\mathcal{F}_{x})\geq n$ for every $x\in Z$.
    \end{enumerate}
\end{proposition}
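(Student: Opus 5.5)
The plan is to reduce to the affine case, where the statement is Grothendieck's depth characterization. We then pass to arbitrary $X$ through the local cohomology sheaves $\mathcal{H}^{q}_{Z}(\mathcal{F})$ and the local-to-global spectral sequence
\[
E_{2}^{p,q}=H^{p}(X,\mathcal{H}^{q}_{Z}(\mathcal{F}))\Rightarrow H^{p+q}_{Z}(X,\mathcal{F}).
\]
Part (1) needs no work beyond the standard argument. Take an injective resolution $\mathcal{F}\to\mathcal{I}^{\bullet}$. Restrictions of injective $\mathcal{O}_X$-modules on a noetherian scheme are flasque, so $0\to\Gamma_{Z}(X,\mathcal{I}^{\bullet})\to\Gamma(X,\mathcal{I}^{\bullet})\to\Gamma(U,\mathcal{I}^{\bullet})\to 0$ is a short exact sequence of complexes. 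Its long exact sequence is the one in (1).

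For (2), assume first that $X=\Spec A$ is affine, $Z=V(I)$, and $M=\Gamma(X,\mathcal{F})$. Then $H^{i}_{Z}(X,\mathcal{F})=\varinjlim_{t}\operatorname{Ext}^{i}_{A}(A/I^{t},M)$. I will prove
\[
H^{i}_{Z}(X,\mathcal{F})=0 \text{ for all } i<n \iff \operatorname{depth}_{I}M\geq n
\]
by induction on $n$.
\begin{itemize}
\item For $n=1$: $H^{0}_{I}(M)=0$ means no nonzero element of $M$ is killed by a power of $I$. This holds iff $I\not\subseteq$ any associated prime of $M$, i.e.\ iff $I$ contains an $M$-regular element.
\item For the inductive step, pick an $M$-regular $f\in I$. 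The long exact sequence of local cohomology of $0\to M\xrightarrow{f}M\to M/fM\to 0$ lets us shift $n$ by one. For the converse direction, use that $H^{i}_{I}(M)$ is $I$-power torsion, so multiplication by $f$ cannot be injective on it unless it vanishes.
\end{itemize}
It then remains to identify $\operatorname{depth}_{I}M$ with $\min_{\mathfrak p\supseteq I}\operatorname{depth}M_{\mathfrak p}$, i.e.\ with $\min_{x\in Z}\operatorname{depth}\mathcal{F}_{x}$. This follows from the $\operatorname{Ext}$ characterization of depth, since $\operatorname{Ext}^{i}_{A}(A/I,M)_{\mathfrak p}=\operatorname{Ext}^{i}_{A_{\mathfrak p}}(A_{\mathfrak p}/I_{\mathfrak p},M_{\mathfrak p})$.

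Applying the affine case on each affine open and using $\mathcal{H}^{q}_{Z}(\mathcal{F})|_{W}=\widetilde{H^{q}_{Z\cap W}(W,\mathcal{F})}$, we get the sheaf-level equivalence: $\operatorname{depth}\mathcal{F}_{x}\geq n$ for all $x\in Z$ iff $\mathcal{H}^{q}_{Z}(\mathcal{F})=0$ for all $q<n$. The direction ``depth $\Rightarrow$ vanishing'' of the global groups then follows at once from the spectral sequence, since its $E_2$ terms with $q<n$ all vanish.

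The main obstacle is the converse for non-affine $X$, and here I expect the statement must be read with care. Running the spectral sequence inductively only shows that $H^{0}(X,\mathcal{H}^{q}_{Z}(\mathcal{F}))=0$ for the first nonvanishing $q<n$. A nonzero quasi-coherent sheaf can have no global sections, so this does not force the sheaf to vanish. Indeed, on $X=Z=\mathbb{P}^{1}$ with $\mathcal{F}=\mathcal{O}(-1)$ and $n=1$, we have $H^{0}_{Z}(X,\mathcal{F})=H^{0}(\mathbb{P}^{1},\mathcal{O}(-1))=0$, yet $\operatorname{depth}\mathcal{F}_{\eta}=0$ at the generic point.

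So the equivalence in (2) holds as stated when $X$ is affine, or more generally when vanishing is required on every open subset, i.e.\ for the sheaves $\mathcal{H}^{i}_{Z}(\mathcal{F})$. For arbitrary $X$ only ``depth $\Rightarrow$ vanishing'' holds, and this is the direction used later in the paper, e.g.\ to extend sections across $Z$ via (1). My plan is to prove the affine/local equivalence fully, deduce the global implication, and state the converse in this local form.
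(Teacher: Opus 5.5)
Your proposal is correct. It is essentially the standard argument behind the references the paper cites. The paper gives no proof of its own: it cites SGA2, Exposé I, Corollaire 2.9 for (1) and Exposé III, Lemme 3.1 and Proposition 3.3 for (2). Your steps are what lies behind those citations:
\begin{enumerate}
\item the flasque-resolution argument for (1);
\item the affine computation $H^i_Z=\varinjlim_t\operatorname{Ext}^i_A(A/I^t,M)$, with the regular-element induction and the identity $\operatorname{depth}_I M=\min_{\mathfrak p\supseteq I}\operatorname{depth}M_{\mathfrak p}$;
\item the passage to global groups via $H^p(X,\mathcal{H}^q_Z(\mathcal{F}))\Rightarrow H^{p+q}_Z(X,\mathcal{F})$.
\end{enumerate}

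What your write-up adds over the bare citation is that it exposes an overstatement in (2), and your caveat is right. The cited SGA2 results characterize $\operatorname{depth}\mathcal{F}_x\geq n$ for all $x\in Z$ by the vanishing of the sheaves $\mathcal{H}^i_Z(\mathcal{F})$ for $i<n$. Equivalently, they characterize it by the vanishing of $H^i_{Z\cap V}(V,\mathcal{F})$ for every open $V\subseteq X$. Passing to the global groups on $X$ only yields the implication ``depth $\Rightarrow$ vanishing''.

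The converse, as literally stated for global groups on a non-affine $X$, is false. Your example $X=Z=\mathbb{P}^1$, $\mathcal{F}=\mathcal{O}(-1)$ is a valid counterexample. With $n=2$ it fails even if one only checks closed points: $H^0$ and $H^1$ of $\mathcal{O}(-1)$ both vanish, while the stalks at closed points have depth $1$.

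This causes no damage downstream. The only application is in Lemma \ref{reg}, which deduces $H^1_Z(X,\Omega^1_{X/k})=0$ from $\operatorname{depth}\Omega^1_{X/k,x}\geq 2$ for $x\in Z$. That is exactly the direction you prove.
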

\begin{proof}
    The first statement is \cite[Exposé I, Corollaire 2.9]{SGA2}. The second statement is \cite[Exposé III, Lemme 3.1.(i) and Proposition 3.3.(iv)]{SGA2}.
\end{proof}

\subsection{Complete intersections in projective space}\label{1.3}

\begin{definition}
    Let $S/k$ be a smooth scheme. A closed subscheme $X\subseteq S$ of codimension $c$ is called a (global) complete intersection if there are $c$ effective Cartier divisors $D_{i}$ such that
    \[
    X=\bigcap_{i=1}^{c}D_{i}.
    \]
\end{definition}

\begin{definition}
    Let $X$ be a noetherian scheme and $k\geq 0$ an integer. We say that $X$ is $R_{k}$ (or regular in codimension $k$) if for every $x\in X$ such that $\dim(\mathcal{O}_{X,x})\leq k$, the local ring $\mathcal{O}_{X,x}$ is regular.
\end{definition}

\begin{proposition}\label{propriedades intersecoes completas}
    Let $i\colon X\hookrightarrow \mathbb{P}^{n}$ be a complete intersection of dimension $r\geq 1$. Then $X$ satisfies the following.
    \begin{enumerate}
        \item $X$ is a local complete intersection.
        \item $X$ is $R_{1}$ if and only if it is normal.
        \item For all $m\in \mathbb{Z}$, the morphisms
        \[
        H^{i}(\mathbb{P}^{n},\mathcal{O}(m))\to H^{i}(X,\mathcal{O}_{X}(m))
        \]
        are surjections if $i=0$ and isomorphisms if $0<i<r$. In particular, $X$ is geometrically connected.
        \item Let $\iota_{i}\colon X\hookrightarrow H_{i}$ be the inclusion. We have an isomorphism
        \[
        I_{X}/I_{X}^{2}\cong\bigoplus_{i=1}^{c}\iota_{i}^{\ast}(I_{H_{i}}/I_{H_{i}}^{2}).
        \]
    \end{enumerate}
\end{proposition}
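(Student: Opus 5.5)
The proof goes item by item. Throughout, write $X=V(f_{1},\dots,f_{c})$ with $f_{i}$ homogeneous of degree $d_{i}$, $H_{i}=V(f_{i})$, and $c=n-r$. The definition gives $X=\bigcap D_{i}$ with $D_{i}$ effective Cartier divisors of codimension $c$ in total. On $\mathbb{P}^{n}$ every effective Cartier divisor is $V(f)$ for a homogeneous $f$, since $\text{Pic}=\mathbb{Z}$ and $H^{0}(\mathcal{O}(d))$ is the space of degree $d$ forms. Because $\operatorname{codim} X=c$, the sequence $f_{1},\dots,f_{c}$ is regular locally at every point of $X$, since $\mathcal{O}_{\mathbb{P}^{n},x}$ is Cohen--Macaulay. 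It is also regular in the graded ring $S=k[x_{0},\dots,x_{n}]$: a homogeneous system of parameters in a Cohen--Macaulay graded ring is regular, and here the height is $c$ and $\dim S/(f)=r+1\geq 2$.

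For (1): locally $X$ is cut out in the regular scheme $\mathbb{P}^{n}$ by the regular sequence $f_{1},\dots,f_{c}$, so $X$ is a local complete intersection. For (2): an lci is Cohen--Macaulay, hence $S_{k}$ for all $k$ and in particular $S_{2}$. Serre's criterion (normal $\iff R_{1}+S_{2}$) then gives that $R_{1}$ is equivalent to normality.

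For (3) the plan is induction on $c$ via the Koszul resolution. Set $X_{j}=V(f_{1},\dots,f_{j})$, of dimension $n-j$. Since $f_{j+1}$ is a nonzerodivisor on $X_{j}$ (regular sequence), there are exact sequences
\[
0\to\mathcal{O}_{X_{j}}(m-d_{j+1})\to\mathcal{O}_{X_{j}}(m)\to\mathcal{O}_{X_{j+1}}(m)\to0 .
\]
I will prove by induction on $j$ that $H^{i}(X_{j},\mathcal{O}(m))=0$ for $0<i<n-j$ and all $m$, and that $H^{0}(\mathbb{P}^{n},\mathcal{O}(m))\to H^{0}(X_{j},\mathcal{O}(m))$ is surjective. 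The base case is Serre's computation for $\mathbb{P}^{n}$. The long exact sequence then gives vanishing of $H^{i}(X_{j+1})$ for $0<i<n-j-1$, because it sits between $H^{i}(X_{j})$ and $H^{i+1}(X_{j}(m-d))$ with $i+1<n-j$. For surjectivity, $H^{0}(X_{j})\to H^{0}(X_{j+1})$ is onto because $H^{1}(X_{j},\mathcal{O}(m-d))=0$, using $n-j\geq 2$, which holds since $j+1\leq c$ and $r\geq1$. For $0<i<r$ both sides of the map in (3) vanish, so it is an isomorphism. Connectedness follows: $H^{0}(X,\mathcal{O}_{X})$ is a quotient of $k$, hence equal to $k$, and this persists after base change to $\bar{k}$. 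The step needing the most care is the index bookkeeping at the top degree and the surjectivity at $H^{0}$, which requires $r\geq1$.

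For (4): since $I_{X}=I_{H_{1}}+\dots+I_{H_{c}}$ and $I_{H_{i}}=f_{i}\mathcal{O}(-d_{i})$, the map $\bigoplus_{i}\iota_{i}^{\ast}(I_{H_{i}}/I_{H_{i}}^{2})=\bigoplus\mathcal{O}_{X}(-d_{i})\to I_{X}/I_{X}^{2}$ is surjective. For a regular sequence, the conormal module $I/I^{2}$ is free on the images of the $f_{i}$. So locally this is a surjection of free modules of equal rank $c$, hence an isomorphism.
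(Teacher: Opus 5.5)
Your proof is correct and follows the paper's approach: you get (1) from local regular sequences, (2) from Cohen--Macaulayness plus Serre's criterion, and (3) by induction on the number of equations using the sequences $0\to\mathcal{O}_{X_j}(m-d_{j+1})\to\mathcal{O}_{X_j}(m)\to\mathcal{O}_{X_{j+1}}(m)\to 0$. The differences are minor: for (4) you use freeness of the conormal module of a regular sequence together with an equal-rank surjection, where the paper restricts the start of the Koszul resolution to $X$ (the same underlying fact), and your remark that $f_1,\dots,f_c$ is regular in $k[x_0,\dots,x_n]$ supplies a justification the paper omits, namely that each intermediate $X_j$ is a complete intersection on which $f_{j+1}$ is a nonzerodivisor (strictly, $f_1,\dots,f_c$ is only part of a homogeneous system of parameters, but the graded Cohen--Macaulay argument still applies).
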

\begin{proof}
    These are essentially in \cite[Chapter III, Ex. 5.5]{hartshorne}.
    \begin{enumerate}
        \item The local equations of the effective Cartier divisors define a regular sequence of length $c$ in each $\mathcal{O}_{X,x}$.
        \item Since $X$ is a local complete intersection, it is Cohen-Macaulay and the equivalence follows from Serre characterization of normal schemes (see \cite[\href{https://stacks.math.columbia.edu/tag/033P}{Tag 033P}]{stacks-project}).
        \item Let $d_{i}$ be the degree of $H_{i}$. Let $X_{0}\defeq \mathbb{P}^{n}$, $X_{k}\defeq H_{1}\cap\dots\cap H_{k}$ and let $i_{k}\colon X_{k+1}\hookrightarrow X_{k}$ be the inclusion. Then $X_{k+1}$ is an effective Cartier divisor in $X_{k}$ and a complete intersection in $\mathbb{P}^{n}$. We have an exact sequence
        \[
        0\to \mathcal{O}_{X_{k}}(m-d_{k+1})\to \mathcal{O}_{X_{k}}(m)\to i_{k,\ast}\mathcal{O}_{X_{k+1}}(m)\to 0.
        \]
        The claim then follows by induction on $c$ by using the long exact sequence in cohomology associated to the preceding exact sequence.
        \item This follows from \cite[Proposition 19.27]{Grtz2023}, but let me sketch the argument. Let $d_{i}$ be the degree of $H_{i}$ and $f_{i}\in H^{0}(\mathbb{P}^{n},\mathcal{O}(d_{i}))$ such that $H_{i}=V(f_{i})$. The claim follows by looking at the Koszul complex. Indeed, the beginning of the Koszul resolution of the ideal sheaf $I_{X}$ gives
        \[
        0\to K\to \bigoplus_{i,j}\mathcal{O}(-d_{i}-d_{j})\to \bigoplus_{i}\mathcal{O}(-d_{i})\to I_{X}\to 0,
        \]
        where the middle map is multiplication by $(f_{1},\cdots, f_{c})$. Restricting to $X$, the middle morphism vanishes and we get the isomorphism
        \[
        I_{X}/I_{X}^{2}\cong\bigoplus_{i=1}^{c}\mathcal{O}_{X}(-d_{i}).
        \]
    \end{enumerate}
\end{proof}

\section{Bertini's theorems}\label{2}
In this section, we prove some Bertini-type theorems.
\begin{definition}
    Let $V$ be a finite dimensional vector space over a field $k$. We say that a property holds for a generic $v\in V$ if it holds over a dense open set $U\subseteq V$ (as a $k$-scheme).
\end{definition}

\begin{definition}{(The universal section and zero locus)}\label{universal locus def}
Let $X/k$ be a finite type scheme of dimension $n$ and $1\leq c\leq n$ an integer. Let $L$ be a line bundle over $X$, $V$ be a finite dimensional $k$-vector space and $\varphi\colon V\to H^{0}(X,L)$ be a $k$-linear map such that its image generates $L$. We define the universal sections
    \[
    \sigma_{1},\dots, \sigma_{c}\in H^{0}(X\times V^{c}, p_{1}^{\ast}L)
    \]
    of $V$ as the image of the tautological $i$-th component sections 
    \[
    \tau_{1},\dots,\tau_{c}\in H^{0}(X\times V^{c}, V\otimes \mathcal{O}_{X\times V^{c}})
    \]
    by the morphism
    \[
    V\otimes \mathcal{O}_{X\times V^{c}}\to p_{1}^{\ast}L
    \]
    induced by $\varphi$. We also define the universal zero locus $\mathcal{Z}$ of $V$ as
    \[
    \mathcal{Z}\defeq V(\sigma_{1},\dots, \sigma_{c})\subseteq X\times V^{c}.
    \]
    We denote the maps $\mathcal{Z}\to X$ and $\mathcal{Z}\to V^{c}$ by $p_{X}$ and $p_{V}$.
\end{definition}

\begin{proposition}\label{universal locus prop}
Under the hypothesis of Definition \ref{universal locus def}, we have the following.
    \begin{enumerate}
        \item $\mathcal{Z}$ is isomorphic as an $X$-scheme to $\mathbb{V}_{X}(\ker(V\otimes\mathcal{O}_{X}\to L)^{c})$. In particular, it is a vector bundle of rank $c(\dim(V)-1)$ over $X$;
        \item The codimension of $\mathcal{Z}$ as a closed subscheme of $X\times V^{c}$ is equal to $c$;
        \item For $v=(v_{1},\dots, v_{c})\in V^{c}$, the fiber of the induced $k$-morphism $p_{V}\colon \mathcal{Z}\to V^{c}$ over $v$ is isomorphic to $V(\varphi(v_{1}),\dots, \varphi(v_{c}))\subseteq X$;
        \item If $p_{V}\colon \mathcal{Z}\to V^{c}$ is dominant, then for a generic $v\in V^{c}$ we have
        \[
        \dim(V(\varphi(v_{1}),\dots, \varphi(v_{c})))=n-c.
        \]
    \end{enumerate}
\end{proposition}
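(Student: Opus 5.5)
The four assertions of Proposition \ref{universal locus prop} follow from one local computation; (1) is the structural one and (2)–(4) are consequences.

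For (1), I would argue Zariski-locally on $X$. Write $K\defeq\ker(V\otimes\mathcal{O}_{X}\to L)$. Hypothesis (surjectivity onto $L$) gives a short exact sequence of locally free sheaves
\[
0\to K\to V\otimes\mathcal{O}_{X}\to L\to 0,
\]
so $K$ is locally free of rank $\dim(V)-1$. The product $X\times V^{c}$ is the total space of the trivial bundle $(V\otimes\mathcal{O}_{X})^{c}$, and the tautological sections $\tau_{i}$ are the coordinate sections. Let $T\to X$ be an $X$-scheme. A $T$-point of $X\times V^{c}$ over $X$ is a tuple $(w_{1},\dots,w_{c})$ with $w_{i}\in\Gamma(T,V\otimes\mathcal{O}_{T})$. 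It lies in $\mathcal{Z}$ exactly when each $\sigma_{i}$ pulls back to zero, i.e.\ when the image of each $w_{i}$ in $\Gamma(T,L_{T})$ vanishes. Exactness of the pulled-back sequence, which holds because it is locally split, identifies this condition with $w_{i}\in\Gamma(T,K_{T})$. Hence $\mathcal{Z}$ represents the same functor as the total space of $K^{c}$, that is $\mathbb{V}_{X}$ of $K^{c}$ up to the paper's duality convention. Concretely, on an open set where $L$ is trivialised and the sequence splits, choose a basis of $V$ in which one basis vector maps to a generator of $L$. The equations $\sigma_{i}=0$ then solve for one coordinate in each factor, and $\mathcal{Z}$ is a linear subbundle. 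Its rank is $c(\dim V-1)$.

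For (2), $\mathcal{Z}$ is a subbundle of corank $c$ in a bundle over $X$. Locally it is cut out by $c$ coordinate equations, so every fibre over $X$ has codimension $c$. Combining this with the local product structure $\mathcal{Z}|_{U}\cong U\times\mathbb{A}^{c(\dim V-1)}$, I would read off $\dim\mathcal{Z}=n+c\dim V-c$ on each component, so the codimension is $c$.

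For (3), the fibre of $p_{V}$ over a point $v$, after base change to its residue field, is the closed subscheme of $X$ on which the sections $\sigma_{i}$ specialised at $v$ vanish. These specialisations are exactly $\varphi(v_{i})$, so the fibre is $V(\varphi(v_{1}),\dots,\varphi(v_{c}))$.

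For (4), I would apply generic fibre dimension (Chevalley) to the dominant morphism $p_{V}\colon\mathcal{Z}\to V^{c}$, component by component. There is a dense open set of $v$ over which every nonempty fibre has dimension $\dim\mathcal{Z}_{j}-c\dim V$ for the dominating components $\mathcal{Z}_{j}$. Components of $\mathcal{Z}$ that do not dominate have images whose closures are proper closed subsets, and I would remove those closures. By (1), each $\mathcal{Z}_{j}$ is a vector bundle over a component of $X$ of dimension at most $n$, so this fibre dimension is at most $n-c$. Conversely, every nonempty fibre is cut out by $c$ equations in $X$, so by Krull's principal ideal theorem each of its components has dimension at least $n-c$, at least when $X$ is equidimensional, which is the case in later applications. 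Together these give dimension exactly $n-c$, with nonemptiness of the generic fibre coming from dominance.

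The main bookkeeping obstacle is the dimension count in the non-equidimensional case. There I would either assume pure dimension or interpret the statement as giving dimension at most $n-c$ together with the Krull lower bound on the components of dimension $n$.
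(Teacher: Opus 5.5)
Your proposal is correct and follows essentially the same route as the paper: the functor-of-points identification for (1), reading off the dimension for (2), base change for (3), and generic fibre dimension for (4). Your component-by-component treatment of (4) and your pure-dimensionality caveat are a genuine refinement, not a gap. The paper's one-line count $\dim\mathcal{Z}-c\dim V$ tacitly requires a component of $\mathcal{Z}$ of maximal dimension to dominate $V^{c}$. That is automatic when $X$ has pure dimension $n$, which is the case wherever the paper uses (4).
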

\begin{proof}
    \hfill
\begin{enumerate}
    \item First of all, observe that $\ker(V\otimes\mathcal{O}_{X}\to L)$ is indeed locally free of constant rank since $V\otimes\mathcal{O}_{X}\to L$ is surjective. Given an $X$-scheme $T\to X$, a morphism $T\to X\times V^{c}$ corresponds to giving $c$ sections $v_{1},\dots, v_{c}\in H^{0}(T,V\otimes\mathcal{O}_{T})$. Then $T\to X\times V^{c}$ factors through $\mathcal{Z}$ if and only if $v_{i}\in H^{0}(T,\ker(V\otimes\mathcal{O}_{T}\to L_{T}))$ for every $i$. Therefore, $\mathcal{Z}$ has the same functor of points as $\mathbb{V}_{X}(\ker(V\otimes\mathcal{O}_{X}\to L)^{c})$.
    \item This follows directly from (1).
    \item Given such an $v=(v_{1},\dots, v_{c})\in V^{c}$, it corresponds to a section $v\colon X\to X\times V^{c}$ of $p_{X}$ such that $v^{\ast}\sigma_{i}=v_{i}$. It is then clear that we have the commutative pullback diagram
    \[
    \begin{tikzcd}
    V(\varphi(v_{1}),\dots,\varphi(v_{c}))\ar[r]\ar[d, hookrightarrow]\arrow[dr, phantom, "\lrcorner", very near start] & \mathcal{Z}\ar[d, hookrightarrow]
    \\
    X\ar[r]\ar[d]\arrow[dr, phantom, "\lrcorner", very near start] & X\times V^{c}\ar[d]
    \\
    \Spec k\ar[r, "v"] & V^{c}.
    \end{tikzcd}
    \]
    \item If $p_{V}$ is dominant, the generic fiber of $\mathcal{Z}$ has dimension $\dim(\mathcal{Z})-c\dim(V)=n-c$ and the result follows.
\end{enumerate}
\end{proof}

We will be interested in understanding the singularities of the zero locus $i\colon Z=V(s_{1},\dots, s_{c})\hookrightarrow X$ of $c$ global sections $s_{i}$ of a line bundle. For that, we will need the following construction. Let $X/S$ be a smooth scheme over a base $S$, $L$ be a vector bundle over $X$ and $s_{1},\dots, s_{c}\in H^{0}(X,L)$. We can consider their image $p^{1}(s_{1}),\dots, p^{1}(s_{c})$ in the sheaf of first order principal parts $P_{X/S}^{1}(L)$. Upon restricting to $Z$, we have $i^{\ast}p^{1}(s_{i})\in \ker(i^{\ast}P^{1}_{X/S}(L)\to i^{\ast}L)$ for every $i$. Recall that we have an exact sequence
\[
0\to \Omega^{1}_{X/S}(L)\to P^{1}_{X/S}(L)\to L\to 0,
\]
and therefore, each $s_{i}$ induces a global section $ds_{i}\in H^{0}(Z, i^{\ast}\Omega^{1}_{X/S}(L))$. We then have the following lemma.

\begin{lemma}\label{bad locus characterization}
    Let $X/S$ be a smooth scheme of dimension $n$. Let $L$ be a line bundle over $X$ and $s_{1},\dots, s_{c}\in H^{0}(X,L)$. Suppose that $Z=V(s_{1},\dots, s_{c})$. Then $Z$ is smooth of codimension $c$ at a point $x\in Z$ if and only if the $ds_{1}(x),\dots, ds_{c}(x)$ are linearly independent in $\Omega^{1}_{X/S}(L)(x)$.
\end{lemma}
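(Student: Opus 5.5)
The plan is to reduce to the Jacobian criterion in local coordinates. The statement is local on $X$ around $x$, so I would first trivialize $L$ on an affine open neighbourhood $U=\Spec A$ of $x$, so that $L|_U\cong\mathcal{O}_U$ via a generator $e$. I would also shrink $U$ so that $X\to S$ factors as an étale map $U\to \mathbb{A}^n_S$, with coordinates $t_1,\dots,t_n$ whose differentials $dt_1,\dots,dt_n$ form a basis of $\Omega^1_{U/S}$. On $U$ we write $s_i=f_ie$ with $f_i\in A$, so that $Z\cap U=V(f_1,\dots,f_c)$.

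The first step is to identify $ds_i(x)$ with $df_i(x)\otimes e(x)$. The trivialization of $L$ gives a splitting $P^1_{U/S}(L)\cong P^1_{U/S}(\mathcal{O})\cong \mathcal{O}_U\oplus\Omega^1_{U/S}$, and under this splitting $p^1(fe)=(f,df)$. Hence, along $Z$ where $f_i$ vanishes, the induced section of $i^*\Omega^1(L)$ is $df_i\otimes e$. This section is independent of the trivialization. Indeed, if we change $e$ to $ue$ with $u$ a unit, then $d(f_iu^{-1})=u^{-1}df_i+f_i\,d(u^{-1})$, and the second term vanishes on $Z$. So on $Z$ the sections $ds_i$ are well defined, which matches the construction given before the lemma, and linear independence of the $ds_i(x)$ in $\Omega^1(L)(x)$ is equivalent to linear independence of $df_1(x),\dots,df_c(x)$ in $\Omega^1_{X/S}(x)$.

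The second step is to apply the Jacobian criterion for smoothness of $Z\to S$ at $x$. Since $U$ is étale over $\mathbb{A}^n_S$, this is the standard criterion in the form of Stacks, Tag 01V9/00TA. It says that the closed subscheme $V(f_1,\dots,f_c)$ of the smooth $S$-scheme $U$ of relative dimension $n$ is smooth of relative dimension $n-c$ at $x$, equivalently a relative complete intersection of codimension $c$ near $x$, if and only if the images of $df_1,\dots,df_c$ in $\Omega^1_{U/S}\otimes\kappa(x)$ are linearly independent. I would use the conormal sequence
\[
I/I^2\to \Omega^1_{U/S}|_Z\to \Omega^1_{Z/S}\to 0.
\]
Smoothness of codimension $c$ at $x$ gives $\dim\Omega^1_{Z/S}(x)=n-c$, so the image of $I/I^2(x)$, which is spanned by the $df_i(x)$, has dimension $c$; since there are only $c$ of them, they are independent. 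Conversely, if the $df_i(x)$ are independent, we can complete $f_1,\dots,f_c$ to an étale coordinate system near $x$, and then $Z$ is locally étale over $\mathbb{A}^{n-c}_S$, hence smooth of codimension $c$.

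I expect the main obstacle to be the bookkeeping in the first step, namely checking that the paper's $ds_i$, defined through $P^1(L)$, really corresponds to $df_i\otimes e$ and does not depend on choices. The rest is the standard Jacobian criterion. Here \emph{codimension $c$} should be read as relative dimension $n-c$ over $S$, which is the sense needed for the later application over $S=\Spec k$.
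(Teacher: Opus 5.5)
Your proposal is correct and follows the same route as the paper, which simply reduces to a trivialization of $L$ (the statement being local) and invokes the Jacobian criterion. Your identification of $ds_i(x)$ with $df_i(x)\otimes e(x)$ and your conormal-sequence argument for both directions just fill in details the paper leaves implicit. The one slip is the parenthetical ``equivalently a relative complete intersection of codimension $c$'': this is not equivalent to smoothness (e.g.\ $V(t^{2})\subseteq\mathbb{A}^{1}_{k}$), but your argument never actually uses it.
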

\begin{proof}
    Since the statement is local, we can suppose that $L$ is trivial and then this is the Jacobian criterion for smoothness.
\end{proof}

\begin{definition}{(Universal singular locus)}\label{singular locus def}
    Let $X/k$ be a smooth scheme of dimension $n$ and $1\leq c\leq n$ an integer. Let $L$ be a line bundle over $X$, $V$ be a finite dimensional $k$-vector space and $\varphi\colon V\to H^{0}(X,L)$ be a $k$-linear map such that its image generates $L$. Let $\mathcal{Z}$ be the universal zero locus and consider
    \[
    d\sigma_{1}\dots d\sigma_{c}\in H^{0}(\mathcal{Z},i_{\mathcal{Z}}^{\ast}(\Omega^{1}_{X\times V/V}(p_{X}^{\ast}L))).
    \]
    We define the universal singular locus $\mathcal{S}\subseteq \mathcal{Z}$ as
    \[
    \mathcal{S}\defeq V(d\sigma_{1}\wedge\cdots\wedge d\sigma_{c})\subseteq \mathcal{Z}.
    \]
    We will denote the induced morphisms $\mathcal{S}\to X$ and $\mathcal{S}\to V^{c}$ also by $p_{X}$ and $p_{V}$.
\end{definition}

\begin{proposition}\label{singular locus prop}
Under the hypothesis of Definition \ref{singular locus def}, we have the following.
    \begin{enumerate}
        \item Let $v=(v_{1},\dots, v_{c})\in V^{c}$. If $V(\varphi(v_{1}),\dots, \varphi(v_{c}))$ has codimension $c$, then the fiber $S_{v}$ over $v$ is isomorphic to the singular locus of $V(\varphi(v_{1}),\dots, \varphi(v_{c}))$.
        \item Suppose further that the induced $\mathcal{O}_{X}$-linear map
        \[
        p^{1}_{V}\colon V\otimes \mathcal{O}_{X}\to P^{1}_{X/k}(L)
        \]
        given by $v\otimes f\mapsto f\cdot P^{1}(\varphi(v))$ has constant rank $r>c$. Then the codimension of $\mathcal{S}$ as a closed subscheme of $\mathcal{Z}$ is equal to $r-c$.
    \end{enumerate}
\end{proposition}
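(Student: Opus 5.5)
For part (1), I plan to argue by base change, in the same way as Proposition \ref{universal locus prop}(3). Fix $v\in V^{c}$ and let $Z_{v}=V(\varphi(v_{1}),\dots,\varphi(v_{c}))$. The fiber of $p_{V}\colon\mathcal{Z}\to V^{c}$ over $v$ is $Z_{v}$. The formation of $d\sigma_{i}$ commutes with base change along $v\colon\Spec k\to V^{c}$, since relative principal parts and relative differentials of $X\times V^{c}/V^{c}$ are compatible with base change. Hence $d\sigma_{i}$ restricts to $d\varphi(v_{i})$ in $H^{0}(Z_{v},\Omega^{1}_{X/k}(L)|_{Z_{v}})$.

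So $S_{v}$ is the locus in $Z_{v}$ where $d\varphi(v_{1})\wedge\cdots\wedge d\varphi(v_{c})$ vanishes, i.e.\ where the $d\varphi(v_{i})(x)$ are linearly dependent. By Lemma \ref{bad locus characterization}, this is exactly the set of points where $Z_{v}$ fails to be smooth of codimension $c$. Since $Z_{v}$ is a local complete intersection of codimension $c$ everywhere, this is its singular locus. I will read ``isomorphic'' set-theoretically, or with the natural Fitting-type scheme structure on $\mathrm{Sing}$.

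For part (2), I will work locally on $X$, trivialising $L$ and $\Omega^{1}_{X/k}$, and use the vector-bundle description $\mathcal{Z}\cong\mathbb{V}_{X}(K^{c})$ with $K=\ker(V\otimes\mathcal{O}_{X}\to L)$. For a point $x\in X$ and a tuple $(v_{1},\dots,v_{c})\in K(x)^{c}$, the values $d\sigma_{i}$ are the images of the $v_{i}$ under the fiber map
\[
K(x)\to\Omega^{1}_{X/k}(L)(x)
\]
induced by $p^{1}_{V}$. This uses the exact sequence $0\to\Omega^{1}(L)\to P^{1}(L)\to L\to 0$ together with the fact that $V\otimes\mathcal{O}\to L$ is surjective. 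By the constant-rank hypothesis, $\ker(p^{1}_{V})=N$ is a subbundle of $K$, and the image $K/N=E$ is a vector bundle of rank $r-1$. Here rank $r$ for $p^{1}_{V}$ drops by one when restricting to $K$, because the composite to $L$ is surjective.

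Thus $\mathcal{S}$ is the preimage under the linear bundle map $\mathbb{V}(K^{c})\to\mathbb{V}(E^{c})$ of the locus of $c$-tuples in the rank-$(r-1)$ bundle $E$ that are linearly dependent. This is the determinantal variety of $(r-1)\times c$ matrices of rank $<c$. Its codimension is $(r-1)-c+1=r-c$, by the classical formula $(m-c+1)(n-c+1)$ with $m=r-1$, $n=c$. The map is smooth and surjective: it is a vector bundle projection with fibers $N^{c}$. Codimension is therefore preserved, and $\mathcal{S}$ has codimension $r-c$ in $\mathcal{Z}$.

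The main obstacle is bookkeeping rather than ideas. One point is correctly identifying $d\sigma_{i}$ with the fiberwise linear map $K\to\Omega^{1}(L)$ of constant rank $r-1$. The other is checking that the scheme $V(d\sigma_{1}\wedge\cdots\wedge d\sigma_{c})$ is precisely the pullback of the determinantal locus, which needs $r-1\geq c$, i.e.\ $r>c$. Once that is established, the rest is the standard irreducibility and codimension of generic determinantal varieties, applied fiberwise over $X$.
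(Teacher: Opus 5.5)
Your proposal is correct and follows essentially the same route as the paper. Part (1) is the base-change compatibility of $d\sigma_i$ combined with Lemma \ref{bad locus characterization}. Part (2) identifies $\mathcal{S}$, fiberwise over $X$, as the preimage under the surjection $\ker(V\otimes\mathcal{O}_X\to L)\twoheadrightarrow\ker(\im p^1_V\to L)$ of the locus of linearly dependent $c$-tuples in a rank-$(r-1)$ bundle; this determinantal locus has codimension $r-c$. The only difference is cosmetic: the paper passes to $\bar k$ and counts closed-fiber dimensions, $c(\dim V-r)+(c-1)r$, whereas you pull codimension back along the smooth surjective bundle projection, which spares you the fiber-dimension bookkeeping.
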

\begin{proof}
    \hfill
    \begin{enumerate}
        \item This follows from Lemma \ref{bad locus characterization}.
        \item Since passing to a larger field extension does not change dimensions, we can suppose that $k$ is algebraically closed. Define the vector bundle of rank $r-1$,
        \[
        K\defeq \ker(\im p^{1}_{V}\to L).
        \]
        The morphism $p^{1}_{V}$ induces a surjection
        \[
        \psi\colon \ker(V\otimes \mathcal{O}_{X}\to L)\twoheadrightarrow K.
        \]
        Then the fiber of $\mathcal{S}$ over a closed point $x\in X(k)$ is given by
        \[
        \{(v_{1},\dots, v_{c})\in \ker(V\to L(x))\mid \psi_{x}(v_{1})\wedge\dots\wedge\psi_{x}(v_{c})=0\}
        \]
        In other words, the fiber over $x$ is given by the preimage of the variety $D_{c}$ of linearly dependent $c$-tuples in $K(x)^{c}$ by $\psi^{c}_{x}$. $D_{c}$ is isomorphic to a determinantal variety of $(r-1)\times c$ matrices of rank at most $c-1$ and therefore, has dimension $(c-1)r$. We then have that
        \begin{align*}
        \dim \mathcal{S}_{x} &=\dim (\psi^{c}_{x})^{-1}(D_{c})=c\dim\ker(\psi_{x})+\dim D_{c}   
        \\
        &=c(\dim(V)-r)+(c-1)r=c\dim(V)-r.
        \end{align*}
        Since $p_{X}\colon \mathcal{S}\to X$ is surjective, we get that for any closed point $x$,
        \[
        \dim \mathcal{S}=\dim(X)+\dim(\mathcal{S}_{x})=n+c\dim(V)-r
        \]
        and thus
        \[
        \text{codim}_{\mathcal{Z}}(\mathcal{S})=(n+c(\dim(V)-1))-(n+c\dim(V)-r)=r-c.
        \]    
    \end{enumerate}
\end{proof}

Our main Bertini-type result is the following.

\begin{theorem}\label{Bertini}
    Let $k$ be a field. Let $X/k$ be a smooth scheme over $k$ of pure dimension $n$ and $1\leq c\leq n$ be an integer. Let $L$ be a line bundle over $X$, $V$ be a $k$-vector space, $\varphi\colon V\to H^{0}(X,L)$ be a $k$-linear map and consider the $\mathcal{O}_{X}$-linear map
    \[
    p^{1}_{V}\colon V\otimes_{k}\mathcal{O}_{X}\to P^{1}(L).
    \]
    Suppose that
    \begin{enumerate}
        \item The image of $\varphi$ generates $L$;
        \item $p^{1}_{V}$ has constant rank $r>c$;
    \end{enumerate}
    Then for a generic $v\in V^{c}$, the zero locus $V(\varphi(v_{1}),\dots, \varphi(v_{c}))$ has dimension $n-c$ and its singular locus has dimension at most $n-r$.
\end{theorem}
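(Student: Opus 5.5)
The plan is to run the universal zero locus and universal singular locus of Definitions \ref{universal locus def} and \ref{singular locus def} through a dimension count on the fibres of $p_{V}$. First reduce to the case where $V$ is finite dimensional, if that is not already implicit, and note that all dimension statements can be checked after extending to $\bar k$. Since a generic point of $V^{c}$ in the $k$-scheme sense is fine, we may work geometrically.

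\textbf{The zero locus.} By Proposition \ref{universal locus prop}, $\mathcal{Z}$ is a vector bundle of rank $c(\dim V-1)$ over $X$. Hence it is irreducible on each component of $X$ and of pure dimension $n+c\dim V-c$. Its fibre over $v$ is $Z_{v}=V(\varphi(v_{1}),\dots,\varphi(v_{c}))$.

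We need $p_{V}$ to be dominant on at least one component. Every nonempty fibre has dimension at least $n-c$, since it is cut out by $c$ equations in an $n$-dimensional smooth scheme. The fibre dimension theorem, applied to each component, then gives one of two outcomes:
\begin{itemize}
    \item if the image of a component is not dense, that component lies over a proper closed subset, and generic fibres do not meet it;
    \item otherwise the generic fibre has dimension exactly $\dim\mathcal{Z}-c\dim V=n-c$.
\end{itemize}
Either way, for generic $v$ the locus $Z_{v}$ is empty or of pure dimension $n-c$. To get non-emptiness I would use that $r>c$ forces enough sections. A cleaner route: dominance follows because the differential of $p_{V}$ at a point of $\mathcal{Z}$ where the $d\sigma_{i}$ are independent is surjective. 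Such points exist since $\mathcal{S}\neq\mathcal{Z}$, as $\operatorname{codim}\mathcal{S}=r-c>0$. So $p_{V}$ is smooth somewhere, hence dominant, and Proposition \ref{universal locus prop}(4) applies. (If $X$ is not connected, the conclusion is read componentwise, as "dimension" allows.)

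\textbf{The singular locus.} By Proposition \ref{singular locus prop}(2), $\mathcal{S}$ has codimension $r-c$ in $\mathcal{Z}$, so
\[
\dim\mathcal{S}=n+c\dim V-r.
\]
Let $U\subseteq V^{c}$ be the dense open set where $Z_{v}$ has codimension $c$. By Proposition \ref{singular locus prop}(1), for $v\in U$ the fibre $\mathcal{S}_{v}$ is $\operatorname{Sing}(Z_{v})$. Apply the fibre dimension theorem to $p_{V}\colon\mathcal{S}\to V^{c}$, component by component, using Chevalley constructibility:
\begin{itemize}
    \item a component that does not dominate has fibres that are empty over a dense open set;
    \item a component that dominates has generic fibre dimension $\dim(\text{component})-c\dim V\leq n-r$.
\end{itemize}
Intersecting the finitely many resulting dense opens with $U$ gives a dense open set on which $\dim\operatorname{Sing}(Z_{v})\leq n-r$, with the empty set counted as satisfying the bound.

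\textbf{Main obstacle.} I expect the delicate points to be bookkeeping rather than substance. One is making "generic" mean a dense open of the $k$-scheme $V^{c}$ even though we computed over $\bar k$; this is handled by upper semicontinuity of fibre dimension and the fact that the relevant open sets descend, since they are defined by $k$-schemes. The other is ensuring that Proposition \ref{singular locus prop}(1) is applied only on $U$.
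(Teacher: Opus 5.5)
Your proposal is correct and follows the paper's proof essentially step for step. It shows that $p_V\colon\mathcal{Z}\to V^c$ is dominant via a point where $p_V$ is smooth, and then runs the fibre-dimension argument over the irreducible components of $\mathcal{S}$, split into dominant and non-dominant ones. The only difference is how you find a point of $\mathcal{Z}\setminus\mathcal{S}$. You deduce it from the bound $\dim\mathcal{S}<\dim\mathcal{Z}$ in Proposition \ref{singular locus prop}(2), which is legitimate because that proposition does not use dominance and $\mathcal{Z}$ is pure-dimensional. The paper instead constructs such a point directly by lifting $c$ linearly independent vectors of $K(x)$.
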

\begin{proof}
    First of all, we can suppose that $k$ is algebraically closed since passing to the larger field extension does not affect dimensions or smoothness. In this case, the projection from the universal zero locus $p_{V}\colon \mathcal{Z}\to V^{c}$ is dominant. Indeed, it is enough to show that its image contains a nonempty open set. For this, let
    \[
    K\defeq \ker(\im p^{1}_{V}\to L)\subseteq \im p^{1}_{V}.
    \]
    Observe that for a fixed closed point $x\in X(k)$, since $\dim K(x)=r-1\geq c$, we can find $c$ linearly independent vectors $w_{1},\dots, w_{c}\in K(x)$. Since $V\to \im p^{1}_{V}$ is surjective, we can lift each $w_{i}$ to vectors $v_{i}\in V$ which satisfy 
    \begin{enumerate}
        \item $\varphi(v_{i})(x)=0$ for every $i$ and
        \item $d\varphi(v_{1})(x),\dots, d\varphi(v_{c})(x)$ are linearly independent in $\Omega^{1}_{X/k}(x)$.
    \end{enumerate}
    Let $v\defeq (v_{1},\dots, v_{c})$ Now, let $U$ be an open neighborhood of $x$ where $L$ is trivialized, let $e\in \Gamma(U\times V^{c},p_{1}^{\ast}L)$ be a generator and write $\sigma_{i}=f_{i}\cdot e$ with $f_{i}\in \Gamma(X\times V^{c},\mathcal{O}_{X\times V^{c}})$. Under the identification
    \[
    \Omega^{1}_{X\times V^{c}/V^{c}}\xrightarrow{\sim} p_{1}^{\ast}\Omega^{1}_{X/k}
    \]
    the vector $df_{i}(x,v)\in \Omega^{1}_{X\times V^{c}/V^{c}}(x,v)$ is sent to $ds_{i}(x)\in p_{1}^{\ast}\Omega^{1}_{X/k}$. Thus, the $df_{i}(x,v)$'s are linearly independent in $\Omega^{1}_{X\times V^{c}/V^{c}}(x,v)$ and, $p_{V}$ is smooth at $(x,v)$. Since a smooth morphism is open, we conclude that the image of $p_{V}$ contains a nonempty open set. In particular, by Proposition \ref{universal locus prop}, we have 
    \[
    \dim(\mathcal{Z}_{v})=n-c
    \]
    for a generic $v\in V^{c}$. By Proposition $\ref{singular locus prop}$, the fiber $\mathcal{S}_{v}$ of $\mathcal{S}$ over a point $v\in V^{c}$ such that $\mathcal{Z}_{v}$ has dimension $n-c$ is exactly its singular locus. We would like to calculate its dimension. For that, let $\{\mathcal{S}_{i}\}_{i\in I}$ be the (finitely many) irreducible components of $\mathcal{S}$. Let $I_{1}\subseteq I$ be the subset of indices such that 
    \[
    \overline{p_{V}(\mathcal{S}_{i})}=V^{c}
    \]
    and $I_{2}=I\setminus I_{1}$. For each $i\in I_{1}$, the morphism $p_{V,i}\colon \mathcal{S}_{i}\to V^{c}$ is dominant and therefore, the generic fiber satisfies
    \[
    \dim p_{V,i}^{-1}(v)=\dim \mathcal{S}_{i}-\dim V^{c}\leq \dim \mathcal{S}-\dim V^{c}=n-r.
    \]
    Let $U_{1}\subseteq V^{c}$ be a dense open subset for which the preceding inequality holds for every $i\in I_{1}$ and
    \[
    U_{2}=V^{c}\setminus(\bigcup_{i\in I_{2}} \overline{p_{V,i}(\mathcal{S}_{i})}).
    \]
    Then for every closed point $v\in U_{1}\cap U_{2}$ we have
    \[
    \dim p_{V}^{-1}(v)= \max_{i\in I}\dim p_{V,i}^{-1}(v)\leq n-r.
    \]
    Restricting ourselves to the dense open of $V$ where $\mathcal{Z}_{v}$ has dimension $n-c$, for a generic $s\in V^{c}$,
    \[
    \text{dim}(\text{Sing}(\mathcal{Z}_{v}))\leq n-r
    \]
    and we are done.
\end{proof}

\begin{remark}
The preceding theorem in the case $c=1$ is essentially contained in the proof of \cite[Theorem 2.2]{Biswas2024}. Their theorem allows singular $X$ and drops the constant rank hypothesis. On the other hand, they enunciate it for $k$ algebraically closed and $X/k$ projective. Since the constant rank on a smooth scheme is enough for our purposes while the non-projectivity of $X/k$ is also needed, we decided to give a self-contained proof.
\end{remark}

We can also rewrite the preceding theorem as a statement about morphisms to projective space instead of linear systems.

\begin{proposition}\label{Bertini 2}
    Let $k$ be a field. Let $X/k$ be a smooth scheme over $k$ of pure dimension $n$ and $1\leq c\leq n$ be an integer. Let
    \[
    f\colon X\to \mathbb{P}^{d}
    \]
    be a morphism to projective space. Suppose that the pullback map
    \[
    f^{\ast}\Omega^{1}_{\mathbb{P}^{d}/k}\to \Omega^{1}_{X/k}
    \]
    has constant rank $\rho\geq c$. Then for $c$ generic hyperplanes $H_{1},\dots, H_{c}$ in $\mathbb{P}^{d}$, $f^{-1}(H_{1}\cap\dots\cap H_{c})$ is a complete intersection of dimension $n-c$ and its singular locus has dimension at most $n-\rho-1$.
\end{proposition}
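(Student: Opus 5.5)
We reduce this to Theorem \ref{Bertini} by taking $L\defeq f^{\ast}\mathcal{O}_{\mathbb{P}^{d}}(1)$, $V\defeq H^{0}(\mathbb{P}^{d},\mathcal{O}(1))$ and $\varphi\colon V\to H^{0}(X,L)$ the pullback of sections. A $c$-tuple $v=(v_{1},\dots,v_{c})\in V^{c}$ is the same as $c$ linear forms, and for nonzero $v_{i}$ with hyperplanes $H_{i}=V(v_{i})$ we have $f^{-1}(H_{1}\cap\dots\cap H_{c})=V(\varphi(v_{1}),\dots,\varphi(v_{c}))$ scheme-theoretically. A generic hyperplane tuple then corresponds to a generic point of $V^{c}$. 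Hypothesis (1) of Theorem \ref{Bertini} holds because the linear forms generate $\mathcal{O}(1)$ on $\mathbb{P}^{d}$ and pullback preserves surjectivity of $V\otimes\mathcal{O}\to\mathcal{O}(1)$.

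The main step is to show that $p^{1}_{V}\colon V\otimes\mathcal{O}_{X}\to P^{1}_{X/k}(L)$ has constant rank $r=\rho+1$.
\begin{enumerate}
\item Principal parts are functorial, so $p^{1}_{V}$ factors as
\[
V\otimes\mathcal{O}_{X}\to f^{\ast}P^{1}_{\mathbb{P}^{d}}(\mathcal{O}(1))\to P^{1}_{X}(L).
\]
\item The first map is surjective. On $\mathbb{P}^{d}$, the Euler sequence identifies $P^{1}(\mathcal{O}(1))\cong V\otimes\mathcal{O}$, so $p^{1}$ is an isomorphism there.
\item The second map sits in a morphism of the two exact sequences $0\to\Omega^{1}(L)\to P^{1}(L)\to L\to 0$. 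It is the identity on $L$ and $df\otimes\mathrm{id}_{L}$ on the $\Omega^{1}$ parts.
\item Hence the image of $p^{1}_{V}$ is an extension of $L$ by $\operatorname{im}(f^{\ast}\Omega^{1}_{\mathbb{P}^{d}}\to\Omega^{1}_{X})\otimes L$.
\end{enumerate}
The rank is therefore $\rho+1$, constant since $\rho$ is. Checking this at each point $x$ amounts to a snake-lemma or five-lemma count on fibres, which is exact because the sequences are locally split.

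To apply Theorem \ref{Bertini} we need $r>c$, which is $\rho+1>c$, i.e. $\rho\geq c$ as assumed. The theorem then gives, for generic $v$, that the zero locus has dimension $n-c$ with singular locus of dimension at most $n-r=n-\rho-1$.

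It remains to see that a zero locus of dimension $n-c$ cut out by $c$ sections on the smooth, hence Cohen--Macaulay, scheme $X$ is a complete intersection. Locally the $c$ equations form a regular sequence, since the codimension equals the number of equations in a Cohen--Macaulay ring. Pure dimensionality of $X$ ensures this holds at every point.

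The only delicate point is the identification $P^{1}_{\mathbb{P}^{d}}(\mathcal{O}(1))\cong V\otimes\mathcal{O}$ together with the compatibility of $p^{1}$ with pullback. I would verify these on the standard affine charts directly rather than citing the Euler sequence abstractly.
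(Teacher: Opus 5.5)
Your proposal is correct and follows the paper's route. The paper also reduces to Theorem \ref{Bertini} with $V=H^{0}(\mathbb{P}^{d},\mathcal{O}(1))$ and $L=f^{\ast}\mathcal{O}(1)$, and the ``simple calculation'' it leaves implicit (that $p^{1}_{V}$ has constant rank $\rho+1$) is exactly your factorization through $f^{\ast}P^{1}_{\mathbb{P}^{d}}(\mathcal{O}(1))$, the same diagram used in Lemma \ref{unramified}. The paper does not spell out your extra check that the generic zero locus is a complete intersection. To match its definition you should also note that for generic $v$ each $\varphi(v_{i})$ is nonzero on every component of $X$, hence a nonzerodivisor, so each $V(\varphi(v_{i}))$ is an effective Cartier divisor on all of $X$, not just near the zero locus.
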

\begin{proof}
    A simple calculation shows that the map
    \[
    H^{0}(\mathbb{P}^{d},\mathcal{O}(1))\otimes \mathcal{O}_{X}\to P^{1}_{X}(f^{\ast}\mathcal{O}_{X}(1))
    \]
    has constant rank $r=\rho+1$. The proposition is then equivalent to Theorem \ref{Bertini}.
\end{proof}

\begin{lemma}\label{unramified}
    Let $k$ be a field, $X/k$ a $k$-scheme of finite type and $f\colon X\to\mathbb{P}^{d}$ a morphism. Let $L\defeq f^{\ast}\mathcal{O}_{\mathbb{P}^{d}}(1)$, denote $V\defeq H^{0}(\mathbb{P}^{d},\mathcal{O}_{\mathbb{P}^{d}}(1))$ and consider the pullback map $\varphi\defeq f^{\ast}\colon V\to H^{0}(X,L)$. Then $f$ is unramified if and only if the induced map
    \[
    p^{1}_{V}\colon V\otimes_{k}\mathcal{O}_{X}\to P_{X}^{1}(L)
    \]
    is surjective.
\end{lemma}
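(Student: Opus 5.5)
The plan is to reduce the statement to a pointwise assertion and then compare two exact sequences. First, note that since $f^{\ast}\mathcal{O}(1)=L$ is generated by the image of $V$, the composite $V\otimes\mathcal{O}_{X}\to P^{1}_{X}(L)\to L$ is surjective. Moreover, the source and target of $p^{1}_{V}$ are coherent and $P^{1}_{X}(L)$ is of finite type. By Nakayama's lemma, $p^{1}_{V}$ is therefore surjective if and only if it is surjective on fibers at every point $x\in X$. Similarly, $f$ is unramified if and only if $\Omega^{1}_{X/\mathbb{P}^{d}}=0$, which holds if and only if $f^{\ast}\Omega^{1}_{\mathbb{P}^{d}}\to\Omega^{1}_{X/k}$ is surjective. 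Both conditions are thus local and fiberwise, and the statement is also compatible with field extension, so we may pass to an algebraically closed $k$ if convenient.

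The key step is to establish a morphism of short exact sequences. On $\mathbb{P}^{d}$, the map $p^{1}\colon V\otimes\mathcal{O}_{\mathbb{P}^{d}}\to P^{1}_{\mathbb{P}^{d}}(\mathcal{O}(1))$ is an isomorphism. This is the classical identification from the Euler sequence: both sides have rank $d+1$, and the map is surjective because at a point one can choose linear forms with prescribed value and differential. Functoriality of principal parts gives a natural map $f^{\ast}P^{1}_{\mathbb{P}^{d}}(\mathcal{O}(1))\to P^{1}_{X}(L)$, and $p^{1}_{V}$ factors as
\[
V\otimes\mathcal{O}_{X}\xrightarrow{\sim} f^{\ast}P^{1}_{\mathbb{P}^{d}}(\mathcal{O}(1))\to P^{1}_{X}(L).
\]
This second map sits in a morphism from the pulled-back sequence $f^{\ast}\Omega^{1}_{\mathbb{P}^{d}}(1)\to f^{\ast}P^{1}\to L\to 0$ to the sequence $\Omega^{1}_{X}(L)\to P^{1}_{X}(L)\to L\to 0$. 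On the first terms it is the pullback of differentials twisted by $L$, and on $L$ it is the identity. Pullback preserves right exactness, and the sequence on $\mathbb{P}^{d}$ is locally split, so the top sequence remains exact on the left as well.

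The snake lemma, or more simply the five lemma for surjectivity, now identifies the cokernel of $p^{1}_{V}$ with the cokernel of $f^{\ast}\Omega^{1}_{\mathbb{P}^{d}}\otimes L\to\Omega^{1}_{X/k}\otimes L$, because the maps on $L$ are isomorphisms. Twisting by the invertible sheaf $L$ does not change whether this map is surjective. Hence $p^{1}_{V}$ is surjective if and only if $\Omega^{1}_{X/\mathbb{P}^{d}}=0$, that is, if and only if $f$ is unramified. Since $f$ is of finite type, unramifiedness is equivalent to the vanishing of relative differentials.

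The main obstacle is making the functoriality map of principal parts precise and checking that the left square commutes with the differential maps. I would handle this by working locally where $\mathcal{O}(1)$ is trivialized by $x_{0}$ and $X$ is affine. There $P^{1}(L)\cong\mathcal{O}\oplus\Omega^{1}$ via $s\mapsto(s,ds)$, and $p^{1}_{V}$ sends $x_{i}/x_{0}$ to $(f^{\ast}(x_{i}/x_{0}),\,d f^{\ast}(x_{i}/x_{0}))$. In this chart it is visible that the image is spanned by $1$ together with the differentials $d f^{\ast}(x_{i}/x_{0})$. These differentials generate $\Omega^{1}_{X}$ exactly when $\Omega^{1}_{X/\mathbb{P}^{d}}=0$. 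This local computation could even replace the diagram argument entirely.
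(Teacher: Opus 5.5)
Your proposal is correct and follows essentially the same route as the paper. Both factor $p^{1}_{V}$ through $f^{\ast}P^{1}_{\mathbb{P}^{d}}(\mathcal{O}(1))$, use that $V\otimes\mathcal{O}_{\mathbb{P}^{d}}\to P^{1}_{\mathbb{P}^{d}}(\mathcal{O}(1))$ is surjective (you note it is even an isomorphism), and compare the two principal-parts sequences, which agree on the quotient term, to reduce to surjectivity of $f^{\ast}\Omega^{1}_{\mathbb{P}^{d}}\to\Omega^{1}_{X/k}$, i.e.\ to $\Omega^{1}_{X/\mathbb{P}^{d}}=0$. The differences are cosmetic: you keep the twist by $L$ and identify cokernels via the snake lemma, whereas the paper untwists locally and does a direct diagram chase, and your chart computation is a valid shortcut for the same argument.
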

\begin{proof}
    The map $p^{1}_{V}$ factors through $f^{\ast} P_{\mathbb{P}^{d}}^{1}(1)\to P^{1}_{X}(L)$. Since $\mathcal{O}(1)$ is very ample, the map
    \[
    V\otimes \mathcal{O}_{\mathbb{P}^{d}}\to P_{\mathbb{P}^{d}}^{1}(1)
    \]
    is surjective and therefore,
    \[
    V\otimes \mathcal{O}_{X}\to f^{\ast}P_{\mathbb{P}^{d}}^{1}(1)
    \]
    is surjective. Thus, surjectivity of $p^{1}_{V}$ is equivalent to surjectivity of $f\ast P_{\mathbb{P}^{d}}^{1}(1)\to P^{1}_{X}(L)$. Locally, this map is given by
    \[
    p^{1}_{f}\colon f^{\ast} P_{\mathbb{P}^{d}}^{1}\to P^{1}_{X}.
    \]
    Thus it is enough to prove the surjectivity of $p^{1}_{f}$. Now, we have the following diagram
    \[
    \begin{tikzcd}
    0\ar[r] & f^{\ast}\Omega^{1}_{\mathbb{P}^{d}}\ar[r]\ar[d, "f^{\ast}"] & f^{\ast}P^{1}_{\mathbb{P}^{d}}\ar[r]\ar[d, "p^{1}_{f}"] & \mathcal{O}_{X}\ar[r]\ar[d, equal] & 0
    \\
    0\ar[r] & \Omega^{1}_{X}\ar[r] & P^{1}_{X}\ar[r] & \mathcal{O}_{X}\ar[r] & 0.
    \end{tikzcd}
    \]
    A diagram chase shows that $P^{1}_{f}$ is surjective if and only if $f^{\ast}$ is surjective, i.e. if and only if $\Omega^{1}_{X/\mathbb{P}^{d}}=0$.
\end{proof}

Therefore, in the case where $r=n+1$, we get Bertini's classical theorem as a corollary.

\begin{corollary}{(Bertini's theorem)}\label{Bertini usual}
    Let $k$ be a field. Let $X/k$ be a smooth scheme over $k$ of pure dimension $n$ and fix an integer $1\leq c\leq n$. Let 
    \[
    f\colon X\to \mathbb{P}^{d}
    \]
    be an unramified morphism to projective space. Then for a generic $(H_{1},\dots, H_{c})\in H^{0}(\mathbb{P}^{d},\mathcal{O}(1))^{c}$, $f^{-1}(H_{1}\cap\dots\cap H_{c})$ is a smooth complete intersection of dimension $n-c$.
\end{corollary}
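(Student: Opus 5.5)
We deduce the corollary from Theorem \ref{Bertini} (equivalently, Proposition \ref{Bertini 2}) in the extreme case $r=n+1$. Set $L\defeq f^{\ast}\mathcal{O}_{\mathbb{P}^{d}}(1)$, $V\defeq H^{0}(\mathbb{P}^{d},\mathcal{O}(1))$ and $\varphi\defeq f^{\ast}\colon V\to H^{0}(X,L)$. We check the two hypotheses of Theorem \ref{Bertini} in turn.

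\emph{Hypothesis (1).} The linear forms on $\mathbb{P}^{d}$ generate $\mathcal{O}(1)$. Hence their pullbacks generate $L$, since pullback preserves surjectivity of $V\otimes\mathcal{O}\to\mathcal{O}(1)$.

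\emph{Hypothesis (2).} Because $f$ is unramified, Lemma \ref{unramified} shows that
\[
p^{1}_{V}\colon V\otimes\mathcal{O}_{X}\to P^{1}_{X}(L)
\]
is surjective. Now $P^{1}_{X}(L)$ is an extension of $L$ by $\Omega^{1}_{X/k}\otimes L$. As $X$ is smooth of pure dimension $n$, this sheaf is locally free of rank $n+1$. So $p^{1}_{V}$ has constant rank $r=n+1>n\geq c$.

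Applying Theorem \ref{Bertini}, for generic $(H_{1},\dots,H_{c})\in V^{c}$ the scheme
\[
Z\defeq V(\varphi(H_{1}),\dots,\varphi(H_{c}))=f^{-1}(H_{1}\cap\dots\cap H_{c})
\]
has dimension $n-c$, and its singular locus has dimension at most $n-r=-1$, so it is empty. It remains to see that $Z$ is smooth of codimension $c$ at every point, and hence a complete intersection. Here I would use the set $\mathcal{S}_{v}$ from Proposition \ref{singular locus prop}(1). This set is exactly the locus where $d\sigma_{1}\wedge\dots\wedge d\sigma_{c}$ vanishes, and it is empty for generic $v$. By Lemma \ref{bad locus characterization}, at every point of $Z$ the differentials $ds_{i}$ are then linearly independent. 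Consequently $Z$ is smooth of codimension $c$ everywhere, and locally its $c$ equations form a regular sequence; this is the desired complete intersection structure, cut out by the $c$ Cartier divisors $f^{-1}(H_{i})$.

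The only delicate point is the base field. Theorem \ref{Bertini} is proved after passing to $\bar k$, and "generic" refers to a dense open $U\subseteq V^{c}$ over $k$. Emptiness of the singular locus and smoothness are both stable under base change. So for $v$ in the corresponding open set, the conclusion descends to $k$ without further work.
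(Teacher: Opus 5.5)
Your proposal is correct and is essentially the paper's argument. Lemma \ref{unramified} makes $p^{1}_{V}$ surjective onto the rank $n+1$ bundle $P^{1}_{X}(L)$, so Theorem \ref{Bertini} applies with $r=n+1>c$ and shows the singular locus of a generic $f^{-1}(H_{1}\cap\dots\cap H_{c})$ has dimension at most $-1$, i.e.\ is empty; your remarks on the regular-sequence structure and on descending the generic open set from $\bar k$ to $k$ only spell out details the paper leaves implicit.
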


We would like to also prove a pointed version Theorem \ref{Bertini}.

\begin{proposition}\label{Bertini pontuado}
    Let $k$ be a field. Let $X/k$ be a smooth scheme over $k$ of pure dimension $n$ and $1\leq c\leq n$ be an integer. Let $L$ be a line bundle over $X$, $V$ be a $k$-vector space, $\varphi\colon V\to H^{0}(X,L)$ be a $k$-linear map and consider the $\mathcal{O}_{X}$-linear map
    \[
    p^{1}_{V}\colon V\otimes\mathcal{O}_{X}\to P^{1}_{X/k}(L).
    \]
    Fix $x_{0}\in X(k)$ and let $W\defeq \ker(V\to L(x_{0}))$. Suppose that
    \begin{enumerate}
        \item The image of $\varphi$ generates $L$;
        \item $p^{1}_{V}$ has constant rank $r>c$;
        \item For every closed point $x$ in $U\defeq X\setminus \{x_{0}\}$,
        \[
        \ker p^{1}_{V}(x)\not\subseteq W\otimes k(x).
        \]
    \end{enumerate}
    Then for a generic $v\in W^{c}$, the zero locus $V(\varphi(v_{1}),\dots, \varphi(v_{c}))$ contains $x_{0}$, has dimension $n-c$ and its singular locus has dimension at most $n-r$.
\end{proposition}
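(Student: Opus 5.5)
The plan is to reduce to Theorem \ref{Bertini} applied to the open subscheme $U=X\setminus\{x_{0}\}$ with the linear system $W$, and then to treat the point $x_{0}$ by hand. Every $v\in W^{c}$ satisfies $\varphi(v_{i})(x_{0})=0$, so the zero locus contains $x_{0}$ automatically. By (1), $V\to L(x_{0})$ is surjective, hence $W\subseteq V$ is a hyperplane. As in the proof of Theorem \ref{Bertini}, we may pass to an algebraic closure of $k$ and work with closed points, since dimensions and smoothness are unaffected.

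\textbf{Step 1: the hypotheses of Theorem \ref{Bertini} hold for $(U,\restr{L}{U},W)$.} Let $x\in U$ be a closed point.
\begin{itemize}
\item \emph{Generation.} Suppose $W\otimes k(x)\to L(x)$ were not surjective. Then $W\otimes k(x)$, being a hyperplane, would equal $\ker(V\otimes k(x)\to L(x))$. But this kernel contains $\ker p^{1}_{V}(x)$, which contradicts (3). So the image of $W$ generates $\restr{L}{U}$.
\item \emph{Rank of $p^{1}_{W}$ at closed points.} We have $\ker p^{1}_{W}(x)=\ker p^{1}_{V}(x)\cap (W\otimes k(x))$. By (3) this is a hyperplane in $\ker p^{1}_{V}(x)$. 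Hence $\operatorname{rank} p^{1}_{W}(x)=(\dim V-1)-(\dim V-r-1)=r$.
\item \emph{Rank at all points.} The locus where the fibre rank is $\geq r+1$ is open. Since it contains no closed point, it is empty. Lower semicontinuity then gives rank exactly $r$ at every point.
\item \emph{Local freeness.} Because $U$ is reduced, a map of vector bundles with constant fibre rank has locally free image, which is what Theorem \ref{Bertini} uses.
\end{itemize}
I expect this verification of constant rank (and the generation statement) for $W$ on $U$ to be the main point; it is exactly where (3) is consumed.

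\textbf{Step 2: apply Theorem \ref{Bertini} on $U$.} We obtain a dense open $\Omega_{1}\subseteq W^{c}$ such that, for $v\in\Omega_{1}$, the locus $V(\varphi(v_{1}),\dots,\varphi(v_{c}))\cap U$ has dimension $n-c$ and its singular locus has dimension $\leq n-r$.

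\textbf{Step 3: the point $x_{0}$.} The composite $W\to\im p^{1}_{V}(x_{0})$ lands in $K(x_{0})=\ker(\im p^{1}_{V}(x_{0})\to L(x_{0}))$. It is onto, because $V$ surjects onto $\im p^{1}_{V}(x_{0})$ and $W$ is the preimage of $K(x_{0})$. Since $\dim K(x_{0})=r-1\geq c$, the set of $v\in W^{c}$ whose images $d\varphi(v_{1})(x_{0}),\dots,d\varphi(v_{c})(x_{0})$ are linearly independent is a nonempty open $\Omega_{2}\subseteq W^{c}$; it is open because it is the non-vanishing of a $c\times c$ minor. For $v\in\Omega_{2}$, Lemma \ref{bad locus characterization} shows that the zero locus is smooth of codimension $c$ at $x_{0}$.

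\textbf{Step 4: conclusion.} Take $v\in\Omega_{1}\cap\Omega_{2}$, which is dense open in $W^{c}$. The zero locus then has dimension $n-c$: on $U$ this is Step 2, and near $x_{0}$ it is smooth of dimension $n-c$ by Step 3. Its singular locus is contained in $U$, so it equals the singular locus computed in Step 2 and has dimension $\leq n-r$. This includes the case $r=n+1$, where the singular locus is empty, because $x_{0}$ has been shown to be a smooth point rather than merely adjoined as a point of dimension $0$.
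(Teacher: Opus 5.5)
Your proposal is correct and follows essentially the same route as the paper. Both use (3) to show that $W$ generates $L$ and that $p^{1}_{W}$ has constant rank $r$ on $U$, apply Theorem \ref{Bertini} there, and then use that $W$ surjects onto $\ker(\im p^{1}_{V}(x_{0})\to L(x_{0}))$, of dimension $r-1\geq c$, so that a generic $v\in W^{c}$ gives a zero locus smooth of codimension $c$ at $x_{0}$. Your extra checks (rank at non-closed points, local freeness of the image, and the direct argument for generation) fill in details the paper leaves implicit.
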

\begin{proof}
    First of all, condition $(3)$ implies that the morphism
    \[
    p^{1}_{W}\colon W\otimes \mathcal{O}_{X}\to P^{1}_{U/k}(L)
    \]
    has constant rank $r$ over $U$. Indeed, for a closed point $x\in U$, since $\ker p^{1}_{V}(x)\not\subseteq W\otimes k(x)$, and $W\otimes k(x)$ has dimension $\dim(V)-1$, the image of the restriction of $p^{1}_{V}(x)$ to $W\otimes k(x)$, which is $p^{1}_{W}(x)$, is the same as the image of $p^{1}_{V}$. In particular, $W$ generates $L$ over $U$. By Theorem \ref{Bertini} applied to $W\to H^{0}(X,L)$ over $U$, we get that for a generic $v\in W^{c}$, the zero locus $V(\varphi(v_{1}),\dots, \varphi(v_{c}))\cap U\subseteq U$ has dimension $n-c$ and its singular locus at $U$ has dimension at most $n-r$. Thus, the only point which can cause problems is $x_{0}$. We will prove that generically, this point is smooth of codimension $c$. At $x_{0}$, since $W=\ker(V\to L(x_{0}))$, the image of $p^{1}_{W}$ at $x_{0}$ is equal to
    \[
    \ker(\im p^{1}_{V}(x_{0})\to L(x_{0}))
    \]
    and therefore has dimension $r-1$. This means that the map
    \[
    W\to \Omega^{1}_{X/k}(L)(x_{0})
    \]
    sending $v$ to $d\varphi(v)(x_{0})$ has rank $r-1$. Since $r-1\geq c$, we get that for a generic $v\in W^{c}$, the $d\varphi(v_{i})(x_{0})$'s are linearly independent in $\Omega^{1}_{X/k}(L)(x_{0})$ and by Lemma \ref{bad locus characterization}, that $V(\varphi(v_{1}),\dots, \varphi(v_{c}))$ is smooth of codimension $c$ at $x_{0}$.
\end{proof}

We also state and prove some auxiliary results which will be useful.

\begin{lemma}\label{ignorar fechado}
    Let $k$ be a field. Let $X/k$ be a scheme of finite type over $k$ of pure dimension $n$ and let $U\subseteq X$ be an open subscheme. Let $L$ be a line bundle over $X$, $V$ be a $k$-vector space and $\varphi\colon V\to H^{0}(X,L)$ be a $k$-linear map such that its image generates $L$. Then for a generic $v=(v_{1},\dots, v_{c})\in V^{c}$, we have 
    \[
    \dim(V(\varphi(v_{1}),\dots, \varphi(v_{c}))\cap (X\setminus U))\leq \dim (X\setminus U)-c.
    \]
\end{lemma}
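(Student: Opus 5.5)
The plan is to apply the universal zero locus construction of Definition \ref{universal locus def} to the closed subset $Y\defeq X\setminus U$, given its reduced structure, with the restricted line bundle $L|_{Y}$ and the composite $V\to H^{0}(X,L)\to H^{0}(Y,L|_{Y})$. Its image still generates $L|_{Y}$. First we extend scalars to an algebraic closure. This is harmless: dimensions of fibres do not change, and a dense open subset of $V^{c}_{\bar k}$ on which the inequality holds contains a dense open defined over $k$. To see the latter, note that the locus in $V^{c}$ where the fibre dimension of a finite type morphism is large is constructible and closed under Galois action, so it descends to $k$.

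Next, let $\mathcal{Z}_{Y}\subseteq Y\times V^{c}$ be the universal zero locus. Proposition \ref{universal locus prop}(1) uses only surjectivity of $V\otimes\mathcal{O}_{Y}\to L|_{Y}$ and not smoothness or purity. It therefore shows that $\mathcal{Z}_{Y}$ is a vector bundle of rank $c(\dim V-1)$ over $Y$, so
\[
\dim\mathcal{Z}_{Y}=\dim Y+c\dim V-c.
\]
By Proposition \ref{universal locus prop}(3), the fibre of $p_{V}\colon\mathcal{Z}_{Y}\to V^{c}$ over $v$ is exactly $V(\varphi(v_{1}),\dots,\varphi(v_{c}))\cap Y$.

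The dimension count then mimics the end of the proof of Theorem \ref{Bertini}. We decompose $\mathcal{Z}_{Y}$ into irreducible components $\mathcal{Z}_{i}$. For components dominating $V^{c}$, the generic fibre dimension is
\[
\dim\mathcal{Z}_{i}-c\dim V\leq\dim Y-c,
\]
and by generic fibre dimension this bound holds over a dense open $U_{1}$. For components that do not dominate, we remove the closures of their images to obtain a dense open $U_{2}$; over $U_{2}$ their fibres are empty. On $U_{1}\cap U_{2}$ the fibre, which is the intersection with $X\setminus U$, has dimension at most $\dim(X\setminus U)-c$. Here the empty set is interpreted as satisfying the bound, which also covers the case $\dim Y<c$.

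The only delicate point is bookkeeping rather than mathematics. We must check that Proposition \ref{universal locus prop}(1) and (3) really apply on the possibly non-reduced, non-equidimensional closed subscheme $Y$; working with $Y_{\mathrm{red}}$ suffices since only dimensions matter. We must also state the convention for negative expected dimension, namely that the intersection is generically empty. Beyond this I expect no real obstacle.
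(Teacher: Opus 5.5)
Your proposal is correct and follows the paper's proof. The paper likewise passes to $\bar k$, views the universal zero locus over $X\setminus U$ as a vector bundle of rank $c(\dim V-1)$ to get its dimension, identifies the fibres of the projection to $V^{c}$ with $V(\varphi(v_{1}),\dots,\varphi(v_{c}))\cap(X\setminus U)$, and reuses the dominant/non-dominant component argument from the proof of Theorem~\ref{Bertini}. Your added remarks on descent from $\bar k$, the reduced structure and the empty-fibre convention only make explicit what the paper leaves implicit.
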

\begin{proof}
    Once again, we can suppose that $k$ is algebraically closed. Let $i\colon X\setminus U\hookrightarrow X$ be the closed immersion. Consider the universal zero locus $\mathcal{Z}$ of $V$ over $X\setminus U$. Then $\mathcal{T}\to X\setminus U$ is a vector bundle of rank $c(\dim(V)-1)$ and therefore, of dimension $\dim (X\setminus U)+c(\dim(V)-1)$. Now, consider the projection $\mathcal{T}\to V^{c}$. Observe that for a closed point $v=(v_{1},\dots, v_{c})\in V^{c}$, its fiber is exactly $\mathcal{Z}_{v}\cap (X\setminus U)$. By a similar argument as in the proof of Theorem \ref{Bertini} applied to the projection $\mathcal{T}\to V^{c}$, we get that for a generic $v\in V^{c}$,
    \[
    \dim(\mathcal{Z}_{v}\cap (X\setminus U))\leq \dim (X\setminus U)-c.
    \]
\end{proof}

We also have a pointed version of this lemma.

\begin{lemma}\label{ignorar fechado pontuado}
Let $k$ be a field. Let $X/k$ be a scheme of finite type over $k$ of pure dimension $n$ and let $U\subseteq X$ be an open subscheme. Let $L$ be a line bundle over $X$, $V$ be a finite dimensional $k$-vector space and $\varphi\colon V\to H^{0}(X,L)$ be a $k$-linear map such that its image generates $L$. Fix $x_{0}\in U(k)$ and let $W\defeq \ker(V\to L(x_{0}))$. Suppose that the image of
\[
W\to H^{0}(X,L)
\]
generates $L$ over $X\setminus U$. Then for a generic $v\in W^{c}$, we have 
\[
\dim(V(\varphi(v_{1}),\dots, \varphi(v_{c}))\cap (X\setminus U))\leq \dim (X\setminus U)-c.
\]
\end{lemma}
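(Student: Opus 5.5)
The plan is to reduce directly to Lemma \ref{ignorar fechado}, applied to the closed subscheme $Y\defeq X\setminus U$ with the restricted linear system coming from $W$. Since $x_{0}\in U$, the point $x_{0}$ does not lie on $Y$. The hypothesis says exactly that the image of $W\to H^{0}(X,L)\to H^{0}(Y,\restr{L}{Y})$ generates $\restr{L}{Y}$.

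First, as in the earlier proofs, we may pass to the algebraic closure, since dimensions of fibers over generic points are insensitive to base change. Then we form the universal zero locus of $W$ over $Y$:
\[
\mathcal{T}\defeq V(\sigma_{1},\dots,\sigma_{c})\subseteq Y\times W^{c},
\]
where the $\sigma_{i}$ are the universal sections built from $W\otimes\mathcal{O}_{Y}\to \restr{L}{Y}$. This map is surjective by hypothesis, so Proposition \ref{universal locus prop}(1) applies verbatim to $Y$. (That proof only used surjectivity, not smoothness or pure dimension.) It shows that $\mathcal{T}$ is a vector bundle over $Y$ of rank $c(\dim W-1)$. Hence
\[
\dim\mathcal{T}=\dim Y+c\dim W-c.
\]
By Proposition \ref{universal locus prop}(3), the fiber of $p_{W}\colon\mathcal{T}\to W^{c}$ over $v$ is $V(\varphi(v_{1}),\dots,\varphi(v_{c}))\cap Y$.

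Next we run the same component-by-component argument as in the proof of Theorem \ref{Bertini}. Decompose $\mathcal{T}$ into its finitely many irreducible components $\mathcal{T}_{i}$ and sort them by whether $p_{W}|_{\mathcal{T}_{i}}$ is dominant.

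For dominant components, the generic fiber has dimension
\[
\dim\mathcal{T}_{i}-c\dim W\leq \dim Y-c.
\]
For non-dominant components, we remove the closure of their image, which is a proper closed subset of $W^{c}$. Intersecting the resulting dense opens gives a dense open subset of $W^{c}$ on which every fiber has dimension at most $\dim Y-c$. In particular, fibers over points outside the image are empty, which is also fine.

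I expect no serious obstacle; the content is just that the universal-locus computation only needs the generation hypothesis on $Y$. The one point to watch is that Lemma \ref{ignorar fechado} is phrased with $V$ generating $L$ on all of $X$, whereas here $W$ only generates $L$ over $X\setminus U$. So we must explicitly work on $Y$ rather than on $X$, and note that the rank computation of Proposition \ref{universal locus prop} holds for an arbitrary finite type $Y$ whenever $W\otimes\mathcal{O}_{Y}\to \restr{L}{Y}$ is surjective.

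Equivalently, the statement is exactly Lemma \ref{ignorar fechado} applied to $(Y, Y, \restr{L}{Y}, W)$ with the closed set taken to be all of $Y$. We should still take care with the pure-dimension hypothesis there by arguing componentwise on $Y$.
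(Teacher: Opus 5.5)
Your proposal is correct and follows the paper's argument: the paper proves this lemma by saying the proof of Lemma \ref{ignorar fechado} carries over verbatim. That proof forms the universal zero locus of $W$ over $X\setminus U$, which is a vector bundle of rank $c(\dim W-1)$ by the generation hypothesis, and then runs the generic-fiber-dimension count from the proof of Theorem \ref{Bertini} on its projection to $W^{c}$. Your caveat about pure dimensionality is unnecessary, since the dimension of a vector bundle over any finite type base is the base's dimension plus the rank.
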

\begin{proof}
    The exact same proof as in the unpointed version works.
\end{proof}

Before finishing the section, we prove a technical lemma which will be needed later.

\begin{lemma}\label{potencia boa}
Let $k$ be a field. Let $X/k$ be a smooth scheme and fix $x_{0}\in X(k)$. Let $L$ be a line bundle over $X$, $V$ be a finite dimensional $k$-vector space and $\varphi\colon V\to H^{0}(X,L)$ be a $k$-linear map such that
\begin{enumerate}
    \item The image of $\varphi$ generates $L$;
    \item $p^{1}_{V}$ has constant rank $r>1$;
    \item $\varphi$ separates geometric points, i.e for every pair of points $x_{1}\neq x_{2}\in X(\overline{k})$, there exists $v\in V\otimes \overline{k}$ such that $\varphi(v)(x_{1})\neq 0$ and $\varphi(v)(x_{2})=0$.
\end{enumerate}
 Let $m>1$ be an integer. Let
\[
W_{m}\defeq \ker(V^{\otimes m}\to L^{\otimes m}(x_{0})).
\]
Then
\begin{enumerate}
    \item The image of $\varphi^{m}\colon V^{\otimes m}\to H^{0}(X,L^{\otimes m})$ generates $L^{\otimes m}$;
    \item $p^{1}_{V^{\otimes m}}$ has constant rank $r$;
    \item $\varphi^{m}(W_{m})$ generates $L^{\otimes m}$ on $X\setminus \{x_{0}\}$;
    \item For every closed point $x\neq x_{0}$,
    \[
    \ker p^{1}_{V^{\otimes m}}(x)\not\subseteq W_{m}\otimes k(x).
    \]
\end{enumerate}
\end{lemma}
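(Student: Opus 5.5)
Everything is local and pointwise, so I will argue one closed point at a time, after base-changing to $\overline{k}$. This is harmless: ranks of maps of vector bundles, generation, and the non-containment in (4) are all checked on fibers at closed points and are insensitive to field extension. Here $\varphi^{m}$ is the composite $V^{\otimes m}\to H^{0}(X,L)^{\otimes m}\to H^{0}(X,L^{\otimes m})$, $v_{1}\otimes\dots\otimes v_{m}\mapsto \varphi(v_{1})\cdots\varphi(v_{m})$. Fix a closed point $x$ and a local generator $e$ of $L$ near $x$, and write $\varphi(v)=f_{v}e$. Then $p^{1}_{V}(x)$ is identified with $v\mapsto (f_{v}(x), df_{v}(x))\in k\oplus \Omega^{1}_{X}(x)$, and similarly for $V^{\otimes m}$ using $e^{m}$.

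For (1), pick $v$ with $f_{v}(x)\neq 0$; then $v^{\otimes m}$ has value $f_{v}(x)^{m}\neq 0$. For (2), the Leibniz rule gives
\[
d(f_{v_{1}}\cdots f_{v_{m}})(x)=\sum_{i}\Big(\prod_{j\neq i}f_{v_{j}}(x)\Big)df_{v_{i}}(x).
\]
Hence the image of $p^{1}_{V^{\otimes m}}(x)$ lies in the span of the $f_{v_{1}}(x)\cdots f_{v_{m}}(x)$ together with the sums above. It is therefore contained in the subspace $\{(\prod a_{j},\sum_{i}\prod_{j\neq i}a_{j}\,\omega_{i})\}$, which is spanned by the image $I_{x}$ of $p^{1}_{V}(x)$ (look at terms with at most one $a_{j}=0$). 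For the reverse inclusion, fix $u$ with $f_{u}(x)=1$. Then $u^{\otimes m}\mapsto (1,m\,df_{u}(x))$, and for $w$ with $f_{w}(x)=0$, $w\otimes u^{\otimes m-1}\mapsto (0,df_{w}(x))$. The elements $(0,df_{w}(x))$ span $I_{x}\cap(0\oplus\Omega^{1})$, of dimension $r-1$. Together with one vector having nonzero first coordinate, the image has dimension exactly $r$. (If $p\mid m$, the vector $(1,m\,df_{u}(x))=(1,0)$ still works.) So the rank is constantly $r$.

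For (3) and (4), take $x\neq x_{0}$. By (3) of the hypotheses applied to $x\neq x_{0}$, choose $s\in V\otimes\overline{k}$ with $\varphi(s)(x)\neq 0$ and $\varphi(s)(x_{0})=0$. Then $s^{\otimes m}\in W_{m}$ and $\varphi^{m}(s^{\otimes m})(x)\neq 0$, which proves (3). For (4) I exhibit an element of $\ker p^{1}_{V^{\otimes m}}(x)$ outside $W_{m}\otimes k(x)$. Since $\varphi$ generates $L$ at $x_{0}$, choose $t$ with $f_{t}(x_{0})\neq 0$.

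\textbf{Main obstacle.} The element must both vanish to first order at $x$ (up to value) and be nonzero at $x_{0}$. My first attempt is to use $m\geq 2$ and the element $s\otimes s\otimes t^{\otimes m-2}$, adjusted as follows. Its value at $x_{0}$ is $0$, which is useless, so instead consider $t^{\otimes m}-\lambda\, y$, where $y$ is built from $s$ so that $p^{1}(y)(x)=p^{1}(t^{\otimes m})(x)$ while $y(x_{0})=0$. Concretely, because the image at $x$ computed in (2) equals the image of the subspace spanned by $s\otimes u^{\otimes m-1}$ and $w\otimes u^{\otimes m-1}$, I can take $u=s$ (normalized so $f_{s}(x)=1$) and $w\in\ker(V\to L(x))$. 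Every tensor with a factor $s$ lies in $W_{m}$, so all these elements lie in $W_{m}$, and they already surject onto the image of $p^{1}_{V^{\otimes m}}(x)$. Thus there is $y\in W_{m}$ with $p^{1}(y)(x)=p^{1}(t^{\otimes m})(x)$. The difference $t^{\otimes m}-y$ lies in $\ker p^{1}_{V^{\otimes m}}(x)$, yet its value at $x_{0}$ is $f_{t}(x_{0})^{m}\neq0$, so it is not in $W_{m}$. This is where $m>1$ is used: it gives a spare tensor factor to carry $s$. The hypothesis $r>1$ is needed only to make the statements meaningful for later use.
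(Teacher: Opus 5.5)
\textbf{The gap: the upper bound in (2).} Your parts (1) and (3) are the paper's argument. The idea in (4) is sound, but it depends on (2), and there the bound $\text{rank}\leq r$ is not correctly justified.

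You trivialize $L$ by an arbitrary local generator $e$ and then assert that the image of $p^{1}_{V^{\otimes m}}(x)$, in coordinates $e^{m}$, is contained in $I_{x}$. This is false in general. If $f_{u}(x)=1$, then $u^{\otimes m}\mapsto(1,m\,df_{u}(x))$. This differs from $(1,df_{u}(x))\in I_{x}$ by $(0,(m-1)\,df_{u}(x))$, which need not lie in $K_{x}\defeq I_{x}\cap(0\oplus\Omega^{1}_{X}(x))$.

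For example, near the origin of $\mathbb{A}^{2}=\Spec k[t,u]$ take $e=1$, $V=\langle 1+t,u\rangle$ and $m=2$. Then $I_{x}=\langle(1,dt),(0,du)\rangle$, but $(1+t)^{\otimes 2}\mapsto(1,2\,dt)\notin I_{x}$, in every characteristic. The same failure occurs within the lemma's hypotheses (for $m=2$) whenever $r<\dim X+1$: replace $e$ by $he$ with $h(x)=1$ and $dh(x)\notin df_{u}(x)+K_{x}$.

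Your hint ``look at terms with at most one $a_{j}=0$'' handles generators with a vanishing $a_{j}$. It breaks on those with all $a_{j}\neq 0$. Your own ``reverse inclusion'' produces $J_{u}\defeq\langle(1,m\,df_{u}(x))\rangle\oplus(0\oplus K_{x})$, which is generally not $I_{x}$. So your two containments cannot both hold, and the upper bound is unproved.

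\textbf{The repair.} Over $k(x)$, write $v_{j}=a_{j}u+w_{j}$ with $a_{j}=f_{v_{j}}(x)$ and $f_{w_{j}}(x)=0$. The Leibniz rule gives
\[
p^{1}_{V^{\otimes m}}(x)(v_{1}\otimes\dots\otimes v_{m})=\Bigl(\prod_{j}a_{j}\Bigr)(1,m\,df_{u}(x))+\Bigl(0,\sum_{i}\prod_{j\neq i}a_{j}\,df_{w_{i}}(x)\Bigr)\in J_{u}.
\]
So the image is exactly $J_{u}$, which has dimension $r$.

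Equivalently, take $e=\varphi(u)$ near $x$. Then $df_{u}=0$ and $J_{u}=I_{x}$. This is precisely the paper's isomorphism $F\colon P^{1}(L)\to P^{1}(L^{\otimes m})$, which is the identity in these coordinates.

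With (2) corrected, your (4) goes through. Take $u=s$. The tensors $s^{\otimes m}$ and $w\otimes s^{\otimes m-1}$ lie in $W_{m}\otimes k(x)$, and their images span $J_{s}$, which is the whole image. Since $W_{m}$ is a hyperplane by (1), surjectivity from $W_{m}\otimes k(x)$ is equivalent to $\ker p^{1}_{V^{\otimes m}}(x)\not\subseteq W_{m}\otimes k(x)$.

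\textbf{Comparison with the paper on (4).} The paper uses the separation hypothesis in the opposite direction. It picks $v_{0}$ with $\varphi(v_{0})(x)=0$ and $\varphi(v_{0})(x_{0})\neq 0$. Then $\varphi(v_{0})^{m}$ vanishes to order $m\geq 2$ at $x$, so $v_{0}^{\otimes m}$ lies in the kernel but not in $W_{m}$. That is a one-line argument independent of (2).

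Your route needs only sections vanishing at $x_{0}$ but not at $x$, the same ones used in (3). It also yields directly that $p^{1}_{W_{m}}$ and $p^{1}_{V^{\otimes m}}$ have the same image at $x$, which is the form used in Proposition~\ref{Bertini pontuado}. The price is that it rests on the exact rank computation in (2).
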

\begin{proof}
    Passing to the algebraic closure, we can suppose that $k$ is algebraically closed.
    \begin{enumerate}
        \item This is clear.
        \item Let $x$ be a closed point and $v_{0}\in V$ be such that $\varphi(v_{0})(x)\neq 0$. Let $s_{0}\defeq \varphi(v_{0})$. Then locally around $x$ $s_{0}$ trivializes $L$ and we have an isomorphism
        \[
        F\colon P^{1}_{X/k}(L)\xrightarrow{\sim} P^{1}_{X/k}(L^{\otimes m})
        \]
        given by $p^{1}(s)\mapsto p^{1}(s\otimes s_{0}^{\otimes m-1})$. I will prove that $I\defeq \im p^{1}_{V}$ is taken to $I_{m}\defeq \im p^{1}_{V^{\otimes m}}$ under this isomorphism. Indeed, for $v\in V$
        \[
        F(p^{1}_{V}(v))=F(p^{1}(\varphi(v)))=p^{1}(\varphi(v)\otimes s_{0}^{\otimes m-1})=p^{1}_{V^{\otimes m}}(v\otimes v_{0}^{\otimes m-1})\in I_{m}
        \]
        Since $I$ is generated as an $\mathcal{O}_{X}$-module by the $p^{1}_{V}(v)$'s, $F(I)\subseteq I_{m}$. On the other hand, it is enough to show that for $v_{1},\dots, v_{m}\in V$, we have
        \[
        p^{1}_{V^{\otimes m}}(v_{1}\otimes\dots\otimes v_{m})\in F(I).
        \]
        Let $\varphi(v_{i})=f_{i}s_{0}$ near $x$. Then
        \[
        p^{1}_{V^{\otimes m}}(v_{1}\otimes\dots\otimes v_{m})=p^{1}(f_{1}\cdots f_{m}s_{0}^{\otimes m}).
        \]
        Since $p^{1}$ is a first order differential operator, we get from the Leibniz
        \[
        p^{1}_{V^{\otimes m}}(v_{1}\otimes\dots\otimes v_{m})=\sum_{i=1}^{m}\left(\prod_{j\neq i}f_{j}\right)P^{1}(f_{i}s_{0}^{\otimes m})-(m-1)\left(\prod_{i}f_{i}\right)p^{1}(s_{0}^{\otimes m}).
        \]
        Since $p^{1}(f_{i}s_{0}^{\otimes m})=p^{1}(s_{i}s_{0}^{\otimes m-1})$ and $p^{1}(s_{0}^{\otimes m})$ belong in $F(I)$, we are done.
        \item Given a closed point $x\in X\setminus\{x_{0}\}$, let $v_{0}\in V$ such that $\varphi(v_{0})(x_{0})=0$ and $\varphi(v_{0})(x)\neq 0$. Then $v_{0}^{\otimes m}\in W_{m}$ and $\varphi^{\otimes m}(x)\neq 0$. We conclude that $\varphi^{m}(W_{m})$ generates $L^{\otimes m}$ on $X\setminus \{x_{0}\}$.
        \item Let $v_{0}\in V$ be such that $\varphi(v_{0})(x_0)\neq 0$ and $\varphi(v_{0})(x)= 0$. By the Leibniz rule, we have $p^{1}_{V^{\otimes m}}(v_{0}^{\otimes m})=p^{1}(\varphi(v_{0}^{\otimes m}))(x)=0$ and thus $v_{0}^{\otimes m}\in \ker P^{1}_{V^{\otimes m}}(x)$ but $v_{0}^{\otimes m}\not\in W_{m}$.
    \end{enumerate}
\end{proof}

\section{Fundamental groups of complete intersections in projective space}\label{3}

Let $k$ be a field. Let $X/k$ be a proper connected and reduced scheme and $x\in X(k)$. Recall that we can associate to it different variations of a fundamental group.
\begin{itemize}
    \item The Nori fundamental group scheme $\pi^{N}_{1}(X,x)$ (\cite{Nori},\cite{Nori1982-tm}), i.e the Tannakian group of the category $\cat{EF}(X)$ of essentially finite vector bundles over $X$.
    \item The extended Nori fundamental group scheme $\pi^{EN}_{1}(X,x)$ (\cite{Ota17},\cite{Amr20}), i.e the Tannakian group of the saturation $\cat{EN}(X)$ of $\cat{EF}(X)$ (meaning vector bundles which have a filtration with successive quotients in $\cat{EF}(X)$).
    \item The $S$-fundamental group scheme $\pi^{S}_{1}(X,x)$ (\cite{AIF_2011__61_5_2077_0},\cite{Langer2012-ap}), i.e the Tannakian group of the category $\cat{NF}(X)$ of numerically flat vector bundles over $X$.
    \item The local fundamental group $\pi^{loc}_{1}(X,x)$ (\cite{Mehta2008-pi}), i.e the maximal local quotient of $\pi^{N}_{1}(X,x)$. Its category of representations is isomorphic to the category of Frobenius trivial vector bundles (meaning vector bundles trivialized by some power of Frobenius).
\end{itemize}
We always have inclusions
\[
\cat{EF}(X)\subseteq \cat{EN}(X)\subseteq \cat{NF}(X)
\]
which induce faithfully flat morphisms
\[
\pi^{S}_{1}(X,x)\twoheadrightarrow \pi^{EN}_{1}(X,x)\twoheadrightarrow \pi^{N}_{1}(X,x).
\]

In this section, we prove the vanishing of fundamental groups of complete intersections in projective space under mild regularity conditions. We start by establishing some results about the vanishing of global differential $1$-forms.

\begin{lemma}\label{conormal}
    Let $k$ be a perfect field. Let $i\colon Z\hookrightarrow X$ be a regular closed immersion with $X/k$ a smooth scheme and $Z/k$ a normal scheme. Then the conormal sequence
    \[
    I_{Z}/I^{2}_{Z}\to i^{\ast}\Omega^{1}_{X/k}\to \Omega^{1}_{Z/k}\to 0
    \]
    is also left exact.
\end{lemma}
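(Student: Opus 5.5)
The plan is to show that the first map $\delta\colon I_{Z}/I_{Z}^{2}\to i^{\ast}\Omega^{1}_{X/k}$ is injective, using that both sheaves are locally free on $Z$ and that the map is injective generically.

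\textbf{Both sides are locally free.} Since $i$ is a regular immersion, say of codimension $c$, the conormal sheaf $I_{Z}/I_{Z}^{2}$ is locally free of rank $c$ on $Z$. Since $X/k$ is smooth, $i^{\ast}\Omega^{1}_{X/k}$ is locally free as well. In particular $I_{Z}/I_{Z}^{2}$ is torsion free on each irreducible component of $Z$. Here we use that $Z$ is normal, hence reduced, and that its connected components are integral.

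\textbf{Reduction to the generic points.} Suppose a local section lies in the kernel of $\delta$. If it vanishes at every generic point of $Z$, then it vanishes, because a locally free sheaf on a reduced scheme has no sections supported on a nowhere dense closed subset. So it suffices to prove injectivity of $\delta$ at the generic points $\eta$ of $Z$.

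\textbf{Injectivity at generic points.} Since $k$ is perfect and $Z$ is reduced, $Z$ is generically smooth over $k$. Its smooth locus $Z_{\text{sm}}$ is therefore a dense open subset containing all generic points. On $Z_{\text{sm}}$, the closed immersion $Z_{\text{sm}}\hookrightarrow X$ is a closed immersion of smooth $k$-schemes. For such immersions the conormal sequence
\[
0\to I_{Z}/I_{Z}^{2}\to i^{\ast}\Omega^{1}_{X/k}\to \Omega^{1}_{Z/k}\to 0
\]
is exact on the left and locally split, by EGA IV 17.2.5 or the Stacks Project (smooth over smooth). Hence $\delta$ is injective at every generic point. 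By the previous step, $\delta$ is injective on all of $Z$.

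\textbf{The main subtle point.} The one step needing care is the passage from injectivity over $Z_{\text{sm}}$ to global injectivity. It requires the source to be torsion free, which holds because $I_{Z}/I_{Z}^{2}$ is locally free, and it requires $Z$ to be reduced, which holds because $Z$ is normal. Perfectness of $k$ enters only to guarantee that the smooth locus of $Z$ is dense.

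Normality is used more than strictly necessary here: reducedness alone, together with perfectness of $k$, suffices for this argument.
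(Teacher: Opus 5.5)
Your proof is correct. It shares the paper's skeleton: both show that the kernel $\mathcal{K}$ of $I_{Z}/I_{Z}^{2}\to i^{\ast}\Omega^{1}_{X/k}$ vanishes over the locus where $Z$ is smooth (regular and smooth agree since $k$ is perfect), and then argue that $\mathcal{K}$ cannot be supported only on the complement.

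The difference is the lemma used in that second step. The paper notes that $\mathcal{K}$ is reflexive, being the kernel of a map of locally free sheaves (Proposition~\ref{propriedades feixes reflexivos}(1)). It then uses normality: $Z$ is $R_{1}$, so $\text{Sing}(Z)$ has codimension at least $2$, and a reflexive sheaf on a normal scheme is determined by its restriction to such an open. You instead use only that $\mathcal{K}$ is a subsheaf of a locally free sheaf on a reduced noetherian scheme. Its associated points therefore lie among the generic points of $Z$, where you have checked injectivity.

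Your route is more elementary and strictly more general. As you say, it proves the lemma for any reduced $Z$, for instance a nodal plane cubic. It also shows that the codimension-two input from normality is not needed here: the paper's argument really only uses torsion-freeness of $\mathcal{K}$, since a reflexive sheaf vanishing on a dense open already vanishes. In both versions, perfectness of $k$ is what makes the good locus dense and the conormal sequence left exact on it.
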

\begin{proof}
    Since $i$ is regular and $X/k$ is smooth, $I_{Z}/I^{2}_{Z}$ and $i^{\ast}\Omega^{1}_{X/k}$ are locally free of finite rank and therefore, reflexive modules. Therefore, the kernel of $I_{Z}/I^{2}_{Z}\to i^{\ast}\Omega^{1}_{X/k}$ is also reflexive. Since $Z/k$ is normal, it is $R_{1}$ and thus a reflexive module on $Z$ is zero if it is when restricted to $Z_{\text{reg}}$. The proposition then follows since $I_{Z}/I^{2}_{Z}\to i^{\ast}\Omega^{1}_{X/k}$ is injective when restricted to the regular locus.
\end{proof}

\begin{lemma}\label{Intersecao completa é simplesmente conexa}
    Let $k$ be a perfect field and $n\geq 3$. Let $i\colon X=V(f_{1},\dots,f_{c})\hookrightarrow \mathbb{P}^{n}$ be a normal complete intersection of dimension $\geq 2$ with $f_{i}\in H^{0}(\mathbb{P}^{n},\mathcal{O}_{\mathbb{P}^{n}}(d_{i}))$. Then the group $H^{0}(X,\Omega^{1}_{X/k})$ vanishes.
\end{lemma}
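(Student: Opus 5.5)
By Lemma \ref{conormal}, applied to the regular closed immersion $i\colon X\hookrightarrow\mathbb{P}^{n}$ with $X$ normal, together with Proposition \ref{propriedades intersecoes completas}(4), we get a short exact sequence on $X$
\[
0\to \bigoplus_{j=1}^{c}\mathcal{O}_{X}(-d_{j})\to i^{\ast}\Omega^{1}_{\mathbb{P}^{n}/k}\to \Omega^{1}_{X/k}\to 0.
\]
Taking global sections gives an exact sequence
\[
H^{0}(X,i^{\ast}\Omega^{1}_{\mathbb{P}^{n}})\to H^{0}(X,\Omega^{1}_{X/k})\to \bigoplus_{j}H^{1}(X,\mathcal{O}_{X}(-d_{j})).
\]
So it suffices to show that the outer terms vanish, or at least that the relevant maps are zero.

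For the right-hand term, since $\dim X=r\geq 2$, Proposition \ref{propriedades intersecoes completas}(3) gives $H^{1}(X,\mathcal{O}_{X}(m))\cong H^{1}(\mathbb{P}^{n},\mathcal{O}(m))=0$ for every $m$. Hence each $H^{1}(X,\mathcal{O}_{X}(-d_{j}))$ vanishes, and the right-hand term is zero.

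For the left-hand term, restrict the Euler sequence to $X$:
\[
0\to i^{\ast}\Omega^{1}_{\mathbb{P}^{n}}\to \mathcal{O}_{X}(-1)^{\oplus(n+1)}\to \mathcal{O}_{X}\to 0.
\]
Its cohomology gives
\[
H^{0}(X,\mathcal{O}_{X}(-1))^{n+1}\to H^{0}(X,\mathcal{O}_{X})\to H^{1}(X,i^{\ast}\Omega^{1}_{\mathbb{P}^{n}})\to\cdots,
\]
and more to the point the exact start
\[
0\to H^{0}(X,i^{\ast}\Omega^{1}_{\mathbb{P}^{n}})\to H^{0}(X,\mathcal{O}_{X}(-1))^{n+1}.
\]
Again by Proposition \ref{propriedades intersecoes completas}(3), $H^{0}(X,\mathcal{O}_{X}(-1))$ is a quotient of $H^{0}(\mathbb{P}^{n},\mathcal{O}(-1))=0$, so it vanishes. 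Therefore $H^{0}(X,i^{\ast}\Omega^{1}_{\mathbb{P}^{n}})=0$.

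Combining the two vanishings in the exact sequence above yields $H^{0}(X,\Omega^{1}_{X/k})=0$.

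The step that needs care is the first one. We need left exactness of the conormal sequence, and this is exactly where normality enters through Lemma \ref{conormal}. We also need the identification $I_{X}/I_{X}^{2}\cong\bigoplus_{j}\mathcal{O}_{X}(-d_{j})$ from Proposition \ref{propriedades intersecoes completas}(4). Once these are in place, everything else is a cohomology vanishing on projective space transported to $X$ by Proposition \ref{propriedades intersecoes completas}(3). The hypothesis $\dim X\geq 2$ is what makes the isomorphism in degree $1$ available there.
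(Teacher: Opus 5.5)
Your proposal is correct and follows the paper's proof essentially step for step. You use Lemma~\ref{conormal} and Proposition~\ref{propriedades intersecoes completas}(4) to get the exact conormal sequence with $I_X/I_X^2\cong\bigoplus_j\mathcal{O}_X(-d_j)$, kill $H^1(X,\mathcal{O}_X(-d_j))$ via Proposition~\ref{propriedades intersecoes completas}(3), and kill $H^0(X,i^{\ast}\Omega^1_{\mathbb{P}^n/k})$ through the restricted Euler sequence and $H^0(X,\mathcal{O}_X(-1))=0$.
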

\begin{proof}
    By Lemma \ref{conormal}, the conormal sequence of $i$,
    \[
    0\to I_{X}/I_{X}^{2}\to i^{\ast}\Omega^{1}_{\mathbb{P}^{n}/k}\to \Omega^{1}_{X/k}\to 0
    \]
    is exact. By Proposition \ref{propriedades intersecoes completas}, we have
    \[
    I_{X}/I_{X}^{2}\cong \bigoplus_{i=1}^{c}\mathcal{O}_{X}(-d_{i}).
    \]
    Looking at the long exact sequence in cohomology, to prove the claim, it is enough to show that
    \begin{enumerate}
        \item $H^{1}(X,\mathcal{O}_{X}(-d_{i}))=0$ for every $i$,
        \item $H^{0}(X,i^{\ast}\Omega^{1}_{\mathbb{P}^{n}/k})=0$.
    \end{enumerate}
    The first assertion follows from \ref{propriedades intersecoes completas}. For the second one, look at the Euler sequence
    \[
    0\to \Omega^{1}_{\mathbb{P}^{n}/k}\to \mathcal{O}_{\mathbb{P}^{n}}(-1)^{n+1}\to \mathcal{O}_{\mathbb{P}^{n}}\to 0.
    \]
    Pulling back by $i$, we get a short exact sequence
    \[
    0\to i^{\ast}\Omega^{1}_{\mathbb{P}^{n}/k}\to \mathcal{O}_{X}(-1)^{n+1}\to \mathcal{O}_{X}\to 0.
    \]
    Thus we have an injection $H^{0}(X,i^{\ast}\Omega^{1}_{\mathbb{P}^{n}/k})\hookrightarrow H^{0}(X,\mathcal{O}_{X}(-1)^{n+1})$ and it is enough to show that $H^{0}(X,\mathcal{O}_{X}(-1))=0$. This follows again from \ref{propriedades intersecoes completas}.
\end{proof}

\begin{lemma}\label{reg}
    Let $k$ be a perfect field and $n\geq 3$. Let $i\colon X=V(f_{1},\dots,f_{c})\hookrightarrow \mathbb{P}^{n}$ be a $R_{2}$ complete intersection of dimension $\geq 2$. Then the group $H^{0}(X_{\text{reg}},\Omega^{1}_{X_{\text{reg}}/k})$ vanishes.
\end{lemma}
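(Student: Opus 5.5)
The plan is to reduce to Lemma \ref{Intersecao completa é simplesmente conexa} by showing that every $1$-form on $X_{\text{reg}}$ extends to a global section of a suitable reflexive sheaf on $X$. The Koszul/conormal computation from that lemma can then be redone for that sheaf.

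First I record the basic properties of $X$. Since $X$ is $R_{2}$, it is in particular $R_{1}$, hence normal by Proposition \ref{propriedades intersecoes completas}. Set $Z\defeq \text{Sing}(X)$ and $j\colon U\defeq X_{\text{reg}}\hookrightarrow X$. Because $X$ is $R_{2}$, $Z$ has codimension $\geq 3$ in $X$. Because $X$ is a local complete intersection, it is Cohen--Macaulay, so $\text{depth}(\mathcal{O}_{X,x})=\dim \mathcal{O}_{X,x}\geq 3$ for every $x\in Z$.

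Next I use the conormal sequence. By Lemma \ref{conormal} and Proposition \ref{propriedades intersecoes completas}(4) there is an exact sequence
\[
0\to \bigoplus_{i}\mathcal{O}_{X}(-d_{i})\to i^{\ast}\Omega^{1}_{\mathbb{P}^{n}/k}\to \Omega^{1}_{X/k}\to 0.
\]
I restrict it to $U$ and take cohomology on $U$. It then suffices to show that
\begin{enumerate}
    \item $H^{0}(U,i^{\ast}\Omega^{1}_{\mathbb{P}^{n}})=0$, and
    \item $H^{1}(U,\mathcal{O}_{U}(-d_{i}))=0$.
\end{enumerate}

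For both I compare cohomology on $U$ with cohomology on $X$ using local cohomology, Proposition \ref{propriedades cohomologia local}. The sheaves $\mathcal{O}_{X}(m)$ are locally free, so their depth at points of $Z$ is at least $3$. Hence $H^{q}_{Z}(X,\mathcal{O}_{X}(m))=0$ for $q\leq 2$, and the restriction $H^{q}(X,\mathcal{O}_{X}(m))\to H^{q}(U,\mathcal{O}_{U}(m))$ is an isomorphism for $q=0,1$. Combined with Proposition \ref{propriedades intersecoes completas}(3), this gives the following.
\begin{itemize}
    \item $H^{1}(U,\mathcal{O}_{U}(-d_{i}))\cong H^{1}(X,\mathcal{O}_{X}(-d_{i}))\cong H^{1}(\mathbb{P}^{n},\mathcal{O}(-d_{i}))=0$. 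This needs $\dim X\geq 2$.
    \item $H^{0}(U,\mathcal{O}_{U}(-1))\cong H^{0}(X,\mathcal{O}_{X}(-1))=0$.
\end{itemize}
Statement (1) then follows by restricting the pulled-back Euler sequence to $U$, which gives an injection $H^{0}(U,i^{\ast}\Omega^{1}_{\mathbb{P}^{n}})\hookrightarrow H^{0}(U,\mathcal{O}_{U}(-1))^{n+1}=0$.

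The main point needing care is the $H^{1}$ comparison. It is exactly where the $R_{2}$ hypothesis, rather than mere normality, enters: we need depth $\geq 3$ along $Z$ to kill $H^{2}_{Z}$. I would spell out the long exact sequence
\[
H^{1}_{Z}\to H^{1}(X)\to H^{1}(U)\to H^{2}_{Z}
\]
explicitly. I would also check that the Cohen--Macaulay property gives the depth bound at all points of $Z$, including non-closed ones; it does, since each such point has codimension $\geq 3$.
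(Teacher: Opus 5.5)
Your argument is correct, but it is organized differently from the paper's.

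\textbf{The paper's route.} The paper works with $\Omega^{1}_{X/k}$ on all of $X$. The excision sequence $H^{0}(X,\Omega^{1}_{X/k})\to H^{0}(X_{\text{reg}},\Omega^{1}_{X_{\text{reg}}/k})\to H^{1}_{Z}(X,\Omega^{1}_{X/k})$ reduces the claim to two facts:
\begin{enumerate}
\item the previous lemma, $H^{0}(X,\Omega^{1}_{X/k})=0$;
\item the vanishing of $H^{1}_{Z}(X,\Omega^{1}_{X/k})$.
\end{enumerate}
For the second, the paper shows $\text{depth}(\Omega^{1}_{X/k,x})\geq 2$ at singular points. It gets this by applying the depth lemma to the conormal sequence on $X$, which is left exact by Lemma \ref{conormal} because $X$ is normal.

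\textbf{Your route.} You never touch $\Omega^{1}_{X/k}$ near $Z$. Instead you rerun the computation of the previous lemma directly on $U=X_{\text{reg}}$. You transport the two needed vanishings, of $H^{0}(\mathcal{O}(-1))$ and $H^{1}(\mathcal{O}(-d_{i}))$, from $X$ to $U$ using local cohomology of the line bundles $\mathcal{O}_{X}(m)$. For these, depth $\geq 3$ along $Z$ is immediate from Cohen--Macaulayness and $R_{2}$. Your use of $H^{2}_{Z}(X,\mathcal{O}_{X}(-d_{i}))=0$ carries exactly the information the paper encodes in the inequality $\text{depth}(\Omega^{1})\geq\text{depth}(I_{X}/I_{X}^{2})-1$.

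\textbf{What each buys.} Your route is a bit more self-contained. On $U$ the conormal sequence is left exact simply because $U$ is smooth over the perfect field $k$, so you do not actually need Lemma \ref{conormal} or the normality of $X$. You also avoid the depth lemma for $\Omega^{1}_{X/k}$. The paper's route is more modular, since it reuses the previous lemma as a black box. It also proves more: $\Omega^{1}_{X/k}$ has depth $\geq 2$ along $\text{Sing}(X)$, so $H^{0}(X,\Omega^{1}_{X/k})\to H^{0}(X_{\text{reg}},\Omega^{1}_{X_{\text{reg}}/k})$ is an isomorphism.

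One small point about presentation. Your opening sentence, about extending $1$-forms to a reflexive sheaf on $X$, describes something closer to the paper's strategy than to what you actually carry out. The argument you execute never needs an extension statement for forms, so that sentence should be rewritten to match it.
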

\begin{proof}
Let $Z\defeq \text{Sing}(X)$. We have a long exact sequence in local cohomology
\[
0\to H^{0}_{Z}(X,\Omega^{1}_{X/k})\to H^{0}(X,\Omega^{1}_{X/k})\to H^{0}(X_{\text{reg}},\Omega^{1}_{X_{\text{reg}}/k})\to H^{1}_{Z}(X,\Omega^{1}_{X/k})\to\cdots.
\]
Since $X$ is $R_{2}$, it is normal and by Lemma \ref{Intersecao completa é simplesmente conexa}, $H^{0}(X,\Omega^{1}_{X/k})=0$. Thus, it is enough to prove that 
\[
H^{1}_{Z}(X,\Omega^{1}_{X/k})=0.
\]
This follows if we prove that $\text{depth}(\Omega^{1}_{X,x})\geq 2$ for all $x\in Z$. Since $X$ is a normal complete intersection, we have an exact sequence
\[
0\to I_{X}/I_{X}^{2}\to i^{\ast}\Omega^1_{\mathbb{P}^{n}/k}\to \Omega^{1}_{X/k}\to 0.
\]
By \cite[\href{https://stacks.math.columbia.edu/tag/00LX}{Tag 00LX}]{stacks-project}, we have
\[
\text{depth}(\Omega^{1}_{X/k,x})\geq \min\{\text{depth}(I_{X,x}/I_{X,x}^{2})-1,\text{depth}(\Omega^1_{\mathbb{P}^{n}/k,x})\}.
\]
Since both $I_{X}/I_{X}^{2}$ and $i^{\ast}\Omega^{1}_{\mathbb{P}^{n}/k}$ are locally free of finite rank, they have depth at $x$ equal to the depth of $\mathcal{O}_{X,x}$. Since $X$ is an $R_{2}$ complete intersection (in particular, $\mathcal{O}_{X,x}$ is Cohen-Macaulay for every $x\in X$) and $x\in \text{Sing}(X)$, we find that
\[
\text{depth}(\mathcal{O}_{X,x})=\dim\mathcal{O}_{X,x}\geq 3.
\]
We conclude that
\[
\text{depth}(\Omega^{1}_{X/k,x})\geq 2.
\]
\end{proof}

We are now able to prove results about the fundamental group of a complete intersection in projective space.

\begin{proposition}\label{intersecao completa normal é Nori simplesmente conexa}
Let $k$ be a perfect field and $n\geq 3$. Let $i\colon X=V(f_{1},\dots,f_{c})\hookrightarrow \mathbb{P}^{n}$ be an $R_{2}$ complete intersection of dimension $\geq 2$ and $x_{0}\in X(k)$. Then $\pi_{1}^{N}(X,x_{0})=1$.
\end{proposition}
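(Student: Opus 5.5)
Since $\pi_1^N(X,x_0)$ is a pro-finite group scheme, it suffices to show that every $G$-torsor $P\to X$ with $G/k$ finite admits a section after reduction of structure group, or equivalently, that $X$ has no nontrivial pointed torsors under finite group schemes. Over a perfect field a finite group scheme $G$ is an extension of the étale quotient $G/G^0$ by the connected (local) part $G^0$. We can therefore treat the étale and local parts separately. Concretely, we show that $\pi_1^{\text{ét}}(X)$ is trivial, i.e.\ its geometric version is trivial and $X$ is geometrically connected, and that $\pi_1^{loc}(X,x_0)$ is trivial. We then use that $\pi_1^N$ is an extension, or equivalently that a torsor under a finite group scheme is built by dévissage from étale torsors and torsors under local group schemes of height one. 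Further dévissage through Frobenius kernels reduces us to $\alpha_p$- and $\mu_p$-torsors, and to forms of these after base change.

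\emph{Étale part.} $X$ is geometrically connected by Proposition \ref{propriedades intersecoes completas}(3). A normal complete intersection of dimension $\geq 2$ in $\mathbb{P}^n$ is geometrically simply connected for the étale topology, by Grothendieck's Lefschetz theorem in SGA2 (Exposé XII, Corollaire 3.5): complete intersections of dimension $\geq 2$ have trivial geometric étale fundamental group. Since $x_0\in X(k)$ and $X$ is geometrically connected, any finite étale torsor is then trivialized by the rational point, so the étale quotient of $\pi_1^N(X,x_0)$ is trivial.

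\emph{Local part.} We may pass to $\bar k$, since Nori group schemes commute with base change of perfect fields for proper reduced geometrically connected $X$. By dévissage it suffices to show $H^1_{fl}(X,\alpha_p)=0$ and $H^1_{fl}(X,\mu_p)=0$. We use the standard descriptions: $H^1(X,\alpha_p)=\ker(F\colon H^1(X,\mathcal{O}_X)\to H^1(X,\mathcal{O}_X))$, and $H^1(X,\mu_p)$ sits in
\[0\to H^0(\mathcal{O}_X^*)/(\cdot)^p\to H^1(X,\mu_p)\to \operatorname{Pic}(X)[p]\to 0.\]
The first term vanishes because $H^1(X,\mathcal{O}_X)=0$ by Proposition \ref{propriedades intersecoes completas}(3) (as $\dim X\geq 2$). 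For $\mu_p$, $H^0(\mathcal{O}_X^*)=\bar k^*$ is $p$-divisible. It remains to see $\operatorname{Pic}(X)[p]=0$, and this is where Lemma \ref{reg} enters.

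A $\mu_p$-torsor, or more generally a torsor under a local group scheme of height one, is equivalently given by a $p$-closed foliation or, via $d\log$, by a logarithmic closed $1$-form on the locus where things are nice. Concretely, a line bundle $M$ with $M^{p}\cong\mathcal{O}_X$ yields a nonzero $d\log$ form on $X_{\text{reg}}$, a section of $\Omega^1_{X_{\text{reg}}}$ that is nonzero unless the torsor is trivial. Since $H^0(X_{\text{reg}},\Omega^1)=0$ by Lemma \ref{reg}, such torsors are trivial on $X_{\text{reg}}$. Because $X$ is normal and $\operatorname{codim}\text{Sing}(X)\geq 3$, the torsor (a finite flat, hence reflexive-coherent, algebra) extends uniquely across the singular locus by Proposition \ref{propriedades feixes reflexivos}(2). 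Hence triviality on $X_{\text{reg}}$ gives triviality on $X$. We work on $X_{\text{reg}}$ precisely because the $d\log$/foliation dictionary is clean only in the smooth setting.

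The \textbf{main obstacle} is the local part: making the dévissage rigorous, in particular passing from triviality of $\alpha_p$- and $\mu_p$-torsors to vanishing of the whole $\pi_1^{loc}$ by induction on the order of $G$ via Frobenius kernels, and handling the non-split extension of étale by local. The cleanest route is to argue that $\pi_1^N(X,x_0)$ admits no nontrivial finite quotient. A minimal nontrivial quotient is simple, hence étale, $\alpha_p$ or $\mu_p$ over $\bar k$. Each is excluded by the above, so $\pi_1^N(X,x_0)=1$.
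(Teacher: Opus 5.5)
There is a genuine gap in your d\'evissage. Your \'etale step and your $\mu_p$ computation are fine. The claim that a minimal nontrivial quotient of $\pi_1^N(X,x_0)$, being simple, must be \'etale, $\alpha_p$ or $\mu_p$ over $\bar k$ is false, because that list classifies only the \emph{commutative} simple finite group schemes. A simple infinitesimal group scheme has height one, since its Frobenius kernel is a nontrivial normal subgroup. Such group schemes correspond to restricted Lie algebras with no nonzero proper $p$-ideals, and these need not be abelian. For $p>2$ the Lie algebra $\mathfrak{sl}_2$ is simple, so the Frobenius kernel $(\mathrm{SL}_2)_1$ is a simple infinitesimal group scheme. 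Since $\mathfrak{sl}_2=[\mathfrak{sl}_2,\mathfrak{sl}_2]$, it has no nontrivial homomorphism to $\alpha_p$ or $\mu_p$. So the vanishing of $H^1(X,\alpha_p)$, of $H^1(X,\mu_p)$ and of the \'etale fundamental group says nothing about $(\mathrm{SL}_2)_1$-torsors. The principle you rely on is false in general, not merely unproved: Theorem \ref{fundamental group} produces smooth projective varieties with $\pi_1^N\cong(\mathrm{SL}_2)_1$, and these satisfy all three of your vanishing conditions.

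What you need is an argument that handles all infinitesimal torsors at once, and your $d\log$ argument for $\mu_p$ is already its rank-one case. The paper first reduces to $\bar k$ and uses the Lefschetz theorem to get $\pi_1^N=\pi_1^{loc}$. Representations of $\pi_1^{loc}$ are the Frobenius-trivial vector bundles, so it suffices to show that every $E$ with $(F_X^n)^\ast E$ trivial is itself trivial. On $X_{\mathrm{reg}}$, Cartier descent identifies $(F_X^{n-1})^\ast E$ with the horizontal sections of a connection $d+A$ on the trivial bundle $(F_X^{n})^\ast E$. Here $A$ is an $r\times r$ matrix with entries in $H^0(X_{\mathrm{reg}},\Omega^1_{X_{\mathrm{reg}}/k})$, which vanishes by Lemma \ref{reg}, so $A=0$. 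Induction on $n$ then makes $E|_{X_{\mathrm{reg}}}$ trivial. Reflexivity on the normal scheme $X$, together with $\operatorname{codim}\mathrm{Sing}(X)\geq 2$, extends the trivialization to $X$. Replacing your $\alpha_p$/$\mu_p$ d\'evissage by this rank-$r$ version of your own argument closes the gap. It also makes the $\alpha_p$ computation via $H^1(X,\mathcal{O}_X)$ unnecessary.
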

\begin{proof}
    First of all, by \cite[Proposition 5.14.(5)]{li2026basechangefundamentalgroup}, we can suppose that $k$ is algebraically closed. By Lefschetz theorem for the étale fundamental group (\cite[Exposé X, Exemple 2.2 and Théorème 3.10]{SGA2}), we have
    \[
    \pi_{1}^{ét}(X,x_{0})\cong \pi_{1}^{ét}(\mathbb{P}^{n},x_{0})=1.
    \] 
    If the characteristic of $k$ is zero, then $\pi_{1}^{N}(X,x_{0})=\pi_{1}^{ét}(X,x_{0})=1$. If the characteristic of $k$ is positive, we then have $\pi_{1}^{N}(X,x_{0})=\pi_{1}^{loc}(X,x_{0})$. Since
    \[
    \cat{rep}_{k}(\pi_{1}^{loc}(X,x_{0}))\xrightarrow{\sim} \cat{FT}(X),
    \]
    where $\cat{FT}(X)$ is the category of Frobenius trivial vector bundles, i.e vector bundles $E$ such that $(F_{X}^{n})^{\ast}E$ is trivial for some $n\geq 0$, it is enough to prove that any Frobenius trivial vector bundle is trivial. Given such a vector bundle $E$ over $X$, its restriction to $X_{\text{reg}}$ is also Frobenius trivial. By Lemma \ref{reg}, we have $H^{0}(X_{\text{reg}},\Omega^{1}_{X_{\text{reg}}})=0$. Since $k$ is perfect, we have an isomorphism $X_{\text{reg}}\cong X_{\text{reg}}^{(p)}$ and thus, any connection on $\restr{(F_{X}^{n})^{\ast}E}{X_{\text{reg}}}$ is trivial. By Cartier descent (see \cite[Theorem 5.1]{Katz}), the restriction of $E$ to the regular locus of $X$ is trivial. Since $X$ is normal, $E$ is reflexive and $\text{codim}_{X}\text{Sing}(X)\geq 2$, we conclude that $E$ is trivial.
\end{proof}

\begin{proposition}\label{intersecao completa normal é S simplesmente conexa}
Let $k$ be a perfect field of characteristic $p>0$ and $n\geq 3$. Let $i\colon X=V(f_{1},\dots,f_{c})\hookrightarrow \mathbb{P}^{n}$ be an $R_{2}$ complete intersection of dimension $\geq 2$ and $x_{0}\in X(k)$. Then $\pi_{1}^{S}(X,x_{0})=\pi_{1}^{EN}(X,x_{0})=1$.
\end{proposition}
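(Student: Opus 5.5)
Since $\pi^{S}_{1}\twoheadrightarrow\pi^{EN}_{1}$ is faithfully flat, it suffices to show $\pi^{S}_{1}(X,x_{0})=1$, i.e.\ that every numerically flat bundle on $X$ is trivial. By base change for the $S$-fundamental group (as in the reduction in Proposition \ref{intersecao completa normal é Nori simplesmente conexa}, using \cite{li2026basechangefundamentalgroup} or Langer's compatibility with algebraically closed extensions), I reduce to $k$ algebraically closed.

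The strategy is to reduce numerically flat bundles to Frobenius-trivial ones, which are trivial by the proof of Proposition \ref{intersecao completa normal é Nori simplesmente conexa}. Let $E$ be numerically flat of rank $r$. First, Lefschetz-type restriction: for a general complete intersection surface or curve $C\subseteq X$ cut by hypersurfaces of large degree, restriction of numerically flat bundles is fully faithful (Langer's restriction theorem for strongly semistable sheaves). Rather than go down to curves, I will use Langer's result that numerically flat bundles on a normal projective variety correspond to points of a moduli space of semistable bundles with vanishing Chern classes. The key input is that the Frobenius pullbacks $(F_{X}^{m})^{\ast}E$ are again numerically flat of rank $r$. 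Hence they form a bounded family, and in fact they lie in a family of finite type. Concretely, I want the set $\{(F^{m}_X)^{\ast}E\}_m$ to be finite up to isomorphism. For this I plan to show that the relevant moduli space is a single point, or at least that its deformation theory is trivial. The tangent space to deformations of a numerically flat $E$ involves $H^{1}(X,\mathcal{E}nd(E))$. For $E=\mathcal{O}_X$ this is $H^{1}(X,\mathcal{O}_X)=0$ by Proposition \ref{propriedades intersecoes completas}(3). Filtering $E$ by its Jordan–Hölder factors in $\cat{NF}(X)$ reduces questions to stable numerically flat bundles.

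Given finiteness, there exist $a<b$ with $(F^{a})^{\ast}E\cong (F^{b})^{\ast}E$. By the standard Lange–Stuhler/Dos Santos argument, $(F^{a})^{\ast}E$ is then étale trivializable, hence trivial since $\pi_{1}^{ét}(X)=1$ by Lefschetz. So $E$ is Frobenius trivial, and thus trivial by the argument of Proposition \ref{intersecao completa normal é Nori simplesmente conexa}. This gives $\pi^{S}_{1}=1$.

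The main obstacle is the finiteness of the Frobenius orbit, since Langer's boundedness only yields finite type, not finiteness. To resolve this, I would instead show directly, by induction on rank, that every numerically flat $E$ is an iterated extension of $\mathcal{O}_X$. Every stable numerically flat bundle has, after some Frobenius pullback, the property above, and Langer shows $\pi^{S}$ of a variety with trivial $\pi^{N}$ is controlled by extensions. Extensions of $\mathcal{O}_X$ by $\mathcal{O}_X$ are classified by $H^{1}(X,\mathcal{O}_X)=0$, so iterated extensions of trivial bundles are trivial. To get that stable numerically flat bundles are trivial, I will invoke Langer's theorem \cite{Langer2012-ap} that stable numerically flat bundles on a variety with trivial étale fundamental group and $H^1(\mathcal{O})=0$ have Frobenius-periodic restrictions. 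Failing that, I will restrict to $X_{\mathrm{reg}}$ and use reflexivity as in Proposition \ref{intersecao completa normal é Nori simplesmente conexa}.
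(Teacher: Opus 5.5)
There is a genuine gap. Your reductions are fine: passing to $\pi_1^S$ and to $\bar k$, the observation that iterated extensions of $\mathcal{O}_X$ are trivial because $H^1(X,\mathcal{O}_X)=0$, and the step ``Frobenius periodic $\Rightarrow$ étale trivializable $\Rightarrow$ trivial''. But the one step that carries the content is never proved: that a stable numerically flat bundle $E$ of rank $\geq 2$ on $X$ is Frobenius trivial.

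\begin{itemize}
\item Your first route needs the orbit $\{(F_X^m)^\ast E\}_m$ to be finite. As you note yourself, boundedness only places it in a finite-type family. Your deformation computation only shows that $\mathcal{O}_X^{\oplus r}$ is rigid. It says nothing about $H^1(X,\mathcal{E}\mathrm{nd}(E))$ for a nontrivial stable $E$, so it does not exclude positive-dimensional components of the moduli space.
\item The fallback appeals to a loosely stated ``theorem of Langer'' that you cannot cite in that form. The sentence ``every stable numerically flat bundle has, after some Frobenius pullback, the property above'' presupposes the conclusion.
\item More seriously, the comparison results you allude to (and the Esnault--Mehta type arguments behind them) are for \emph{smooth} projective varieties. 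Here $X$ is only $R_2$ and may be singular in codimension $3$, and handling that is exactly what this statement requires.
\item Restricting to $X_{\mathrm{reg}}$ does not help. $X_{\mathrm{reg}}$ is not proper, so numerical flatness gives no control there. Also, the Cartier-descent/reflexivity argument of Proposition \ref{intersecao completa normal é Nori simplesmente conexa} needs a Frobenius trivialization as input.
\end{itemize}

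The missing idea is an induction on $\dim X$ through general hyperplane sections.

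\emph{Descent step.} By Proposition \ref{Bertini 2} and Lemma \ref{ignorar fechado}, a general hyperplane section $Y=X\cap H$ is again an $R_2$ complete intersection, of dimension $\dim X-1$.

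\emph{Base case.} This lets you descend to $\dim X=2$, where $R_2$ means $X$ is smooth. There Langer's comparison $\pi_1^S\cong\pi_1^N$ \cite[Corollary 8.3]{Langer2012-ap}, combined with the Nori case, gives triviality.

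\emph{Inductive step.} A numerically flat $E$ on $X$ restricts to a numerically flat bundle on $Y$, which is trivial by induction. A Lefschetz-type result \cite[Theorem 2.1]{ASENS_2014__47_4_833_0} then shows that $E$ is Frobenius trivial. Proposition \ref{intersecao completa normal é Nori simplesmente conexa} makes $E$ trivial.

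This restriction argument replaces your unproved finiteness of the Frobenius orbit, and it avoids the singularities of $X$ by only invoking smooth-variety results on the surface at the bottom of the induction.
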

\begin{proof}
    It is enough to prove the claim for $\pi_{1}^{S}(X,x_{0})$. We can assume that $k$ is algebraically closed by \cite[Proposition 5.8.(4)]{li2026basechangefundamentalgroup}. Let us then suppose that the characteristic of $k$ is positive. We prove the statement by induction on the dimension of $X$. If $\dim(X)=2$, then $X/k$ is smooth. By \cite[Corollary 8.3]{Langer2012-ap}, we have $\pi_{1}^{S}(X,x_{0})\cong \pi_{1}^{N}(X,x_{0})$. By Proposition \ref{intersecao completa normal é Nori simplesmente conexa}, we conclude that $\pi_{1}^{S}(X,x_{0})=1$. Suppose now that $\dim(X)\geq 3$, then by Proposition \ref{Bertini 2} and Lemma \ref{ignorar fechado}, intersecting $X$ with a general hyperplane $H$ in $\mathbb{P}^{n}$ gives us another complete intersection $Y\defeq X\cap H$ of dimension $\dim(X)-1$ such that $X_{\text{reg}}\cap H$ is smooth and $\text{Sing}(X)\cap H$ has dimension at most $\dim(\text{Sing}(X))-1$. In particular, $Y$ is $R_{2}$. Since $k$ is algebraically closed, we can choose $y_{0}\in Y(k)$. By induction, we have $\pi_{1}^{S}(Y,y_{0})=1$ and thus, that any numerically flat bundle on $Y$ is trivial. Now, given a numerically flat bundle $E$ over $X$, its restriction to $Y$ is still numerically flat. Thus, it is trivial when restricted to $Y$. By \cite[Theorem 2.1]{ASENS_2014__47_4_833_0}, $E$ is Frobenius trivial. In particular, this implies that $\pi_{1}^{S}(X,x_{0})\cong \pi_{1}^{N}(X,x_{0})$ which is trivial by Proposition \ref{intersecao completa normal é Nori simplesmente conexa}.
\end{proof}

\section{Godeaux--Serre varieties and fundamental groups}\label{4}

Recall that a Godeaux--Serre variety is a smooth projective variety $X$ which is a quotient of a complete intersection in projective space by a free action of a finite group scheme $G$. Of course this complete intersection will then be a $G$-torsor over $X$, but usually we do not have control over the singularities of it if $G$ is not smooth. We will give a construction of Godeaux--Serre varieties in such a way that this complete intersection has controlled singularities.

\begin{lemma}\label{Bertini torsor}
    Let $k$ be a field. Let $G/k$ be a group scheme and $p\colon P\to X$ be a $G$-torsor with $X/k$ and $P/k$ smooth and connected of dimension $n$ and $m$ respectively. Let $g\colon X\to \mathbb{P}^{d}$ be an unramified morphism and denote $f\defeq g\circ p$. Consider the line bundle $L\defeq f^{\ast}\mathcal{O}(1)$ and the morphism
    \[
    \varphi\colon V\defeq H^{0}(\mathbb{P}^{d},\mathcal{O}(1))\to H^{0}(P,L).
    \]
    Then, $p^{1}_{V}\colon V\otimes \mathcal{O}_{P}\to P^{1}_{P/k}(L)$ has constant rank equal to $m-\dim\text{Lie}(G)+1$.
\end{lemma}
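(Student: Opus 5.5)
By the same diagram chase as in Lemma \ref{unramified}, computing the rank of $p^{1}_{V}$ reduces to computing the rank of the pullback of differentials $f^{\ast}\Omega^{1}_{\mathbb{P}^{d}}\to\Omega^{1}_{P/k}$. Since $\mathcal{O}(1)$ is very ample, $V\otimes\mathcal{O}_{P}\to f^{\ast}P^{1}_{\mathbb{P}^{d}}(1)$ is surjective. Hence $\im p^{1}_{V}$ equals the image of $f^{\ast}P^{1}_{\mathbb{P}^{d}}(1)\to P^{1}_{P/k}(L)$. Locally trivializing $\mathcal{O}(1)$, this map sits in the morphism of short exact sequences
\[
0\to f^{\ast}\Omega^{1}_{\mathbb{P}^{d}}\to f^{\ast}P^{1}_{\mathbb{P}^{d}}\to\mathcal{O}_{P}\to 0,\qquad 0\to \Omega^{1}_{P}\to P^{1}_{P}\to \mathcal{O}_{P}\to 0,
\]
with the identity on the right. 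At each point, the rank of the middle map is therefore $1+\operatorname{rank}(f^{\ast}\Omega^{1}_{\mathbb{P}^{d}}\to\Omega^{1}_{P})$. It then suffices to show that $f^{\ast}\Omega^{1}_{\mathbb{P}^{d}}\to\Omega^{1}_{P/k}$ has constant rank $m-\dim\text{Lie}(G)$ at every point of $P$.

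Next I factor this map as $f^{\ast}\Omega^{1}_{\mathbb{P}^{d}}=p^{\ast}g^{\ast}\Omega^{1}_{\mathbb{P}^{d}}\to p^{\ast}\Omega^{1}_{X}\to\Omega^{1}_{P}$. Since $g$ is unramified, the first arrow is surjective (this is the fact used in Lemma \ref{unramified}), and it stays surjective after applying $p^{\ast}$. The rank of the composite is therefore the rank of $p^{\ast}\Omega^{1}_{X/k}\to\Omega^{1}_{P/k}$, whose cokernel is $\Omega^{1}_{P/X}$ by the first fundamental exact sequence. Because $P$ and $X$ are smooth, both $p^{\ast}\Omega^{1}_{X}$ and $\Omega^{1}_{P}$ are locally free, of ranks $n$ and $m$. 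So the rank at a point $y$ equals $m-\dim \Omega^{1}_{P/X}(y)$.

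The main step is to compute $\Omega^{1}_{P/X}$ for a torsor. The torsor isomorphism $G\times P\xrightarrow{\sim}P\times_{X}P$, $(g,y)\mapsto(gy,y)$, together with base change of differentials, gives $\mathrm{pr}_{2}^{\ast}\Omega^{1}_{P/X}\cong \Omega^{1}_{P\times_{X}P/P}\cong \Omega^{1}_{G\times P/P}=\mathrm{pr}_{1}^{\ast}\Omega^{1}_{G/k}$. Pulling back along the identity section $e\times\mathrm{id}_{P}$ yields $\Omega^{1}_{P/X}\cong e^{\ast}\Omega^{1}_{G/k}\otimes_{k}\mathcal{O}_{P}=\omega_{G}\otimes_{k}\mathcal{O}_{P}$. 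This is a free module of rank $\dim_{k}\omega_{G}=\dim\text{Lie}(G)$, since $\text{Lie}(G)=\omega_{G}^{\vee}$. In fact the identification only needs to hold fppf-locally on $X$, and both sides descend, so this works even though $P\to X$ need not be étale. Its fibre dimension at every point is therefore $\dim\text{Lie}(G)$.

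Putting the steps together, $p^{\ast}\Omega^{1}_{X}\to\Omega^{1}_{P}$ has constant rank $m-\dim\text{Lie}(G)$ at every point. Hence $p^{1}_{V}$ has constant rank $m-\dim\text{Lie}(G)+1$. The only point needing care is that the ranks are computed fibrewise, on $k(y)$-vector spaces, using right exactness of the tensor product, which is all that is needed for the rank statement.
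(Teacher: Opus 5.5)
Your proposal is correct and is essentially the paper's argument run in the opposite order: surjectivity of $V\otimes\mathcal{O}_P\to f^{\ast}P^{1}_{\mathbb{P}^{d}/k}(\mathcal{O}(1))$, the extra $+1$ from the principal parts sequences, surjectivity of $g^{\ast}\Omega^{1}_{\mathbb{P}^{d}/k}\to\Omega^{1}_{X/k}$ for unramified $g$, and $\Omega^{1}_{P/X}$ being free of rank $\dim\text{Lie}(G)$. Your derivation of $\Omega^{1}_{P/X}\cong\omega_G\otimes_k\mathcal{O}_P$ from $G\times P\cong P\times_X P$ and the identity section is global, so the fppf-descent remark is unnecessary, and it makes precise the paper's ``after trivializing $p$''; the pullback there should be along the first projection, i.e.\ the action map $a$, but since $a\circ(e\times\mathrm{id}_P)=\mathrm{id}_P$ the conclusion is unaffected.
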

\begin{proof}
    After trivializing $p\colon P\to X$, the sheaf $\Omega^{1}_{P/X}$ becomes isomorphic to $\text{Lie}(G)^{\vee}\otimes \mathcal{O}_{P}$. Therefore, $\Omega^{1}_{P/X}$ is locally free of rank equal to $\dim\text{Lie}(G)$. Therefore, the pullback map $p^{\ast}\Omega^{1}_{X/k}\to \Omega^{1}_{P/k}$ has constant rank equal to $m-\dim\text{Lie}(G)$. Since $g$ is unramified, we get that $f^{\ast}\Omega^{1}_{\mathbb{P}^{d}/k}\to \Omega^{1}_{P/k}$ also has constant rank $m-\dim\text{Lie}(G)$. Therefore, the map
    \[
    f^{\ast}P^{1}_{\mathbb{P}^{d}/k}\to P^{1}_{P/k}
    \]
    has constant rank equal to $m-\dim\text{Lie}(G)+1$. Twisting by $L$ we get that
    \[
    f^{\ast}(P^{1}_{\mathbb{P}^{d}/k}(\mathcal{O}(1)))\to P^{1}_{P/k}(L)
    \]
    has constant rank equal to $m-\dim\text{Lie}(G)+1$. By Lemma \ref{unramified}, $V\otimes \mathcal{O}_{P}\to f^{\ast}(P^{1}_{\mathbb{P}^{d}/k}(\mathcal{O}(1)))$ is surjective and we are done.
\end{proof}

\begin{proposition}\label{Godeaux--Serre normal}
    Let $k$ be an infinite field, $G/k$ be a finite group scheme and $n\geq \dim(\text{Lie}(G))+2$ an integer. Then there exists $N>0$ and an $R_{2}$ complete intersection $Y\subseteq \mathbb{P}^{N}$ with $Y(k)\neq \varnothing$ on which $G$ acts freely and such that the quotient $Y/G$ is a smooth projective scheme of dimension $n$ over $k$. Moreover, if $G/k$ is étale, then $Y$ is smooth.
\end{proposition}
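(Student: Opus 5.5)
We will use the Godeaux--Serre approach: start from a free action of $G$ on an open subset of a projective space, then cut down by general hyperplanes using the Bertini results of Section \ref{2}. Choose a faithful finite-dimensional representation $W$ of $G$, for instance the regular representation $k[G]^{\vee}$, and replace it by $W^{\oplus s}$ for $s\gg 0$. The non-free locus $F\subseteq P(W^{\oplus s})$ is closed and $G$-stable, and its codimension tends to infinity with $s$: for each nontrivial closed subgroup (over $\bar k$) the fixed subspaces in $W^{\oplus s}$ have dimension growing like $s\cdot\dim W^{H}$, with $\dim W^{H}<\dim W$. Set $U\defeq P(W^{\oplus s})\setminus F$. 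Then $G$ acts freely on $U$, and the quotient $U/G$ exists as a smooth quasi-projective variety, with $U\to U/G$ a $G$-torsor. This is standard for finite group schemes acting freely on quasi-projective schemes: the quotient of the whole projective space by $G$ exists and is projective, and we restrict it to the free locus. Embed $U/G$ into some $\mathbb{P}^{d}$ by a very ample $G$-invariant linear system, i.e.\ by $G$-invariant forms $H^{0}(\mathcal{O}(e))^{G}$ for suitable $e$. Call this embedding $g\colon U/G\hookrightarrow\mathbb{P}^{d}$; it is unramified. Composing the inclusion $U\subseteq P(W^{\oplus s})$ with the $e$-th Veronese embedding gives $U\subseteq\mathbb{P}^{N}$, and the hyperplanes of $\mathbb{P}^{d}$ pull back to hyperplane sections of $\mathbb{P}^{N}$.

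Let $M=\dim U=\dim W^{\oplus s}-1$ and $c=M-n$. We take $c$ general hyperplanes $H_{1},\dots,H_{c}$ in $\mathbb{P}^{d}$ and set $X\defeq g^{-1}(H_{1}\cap\dots\cap H_{c})$ and $Y_{0}\defeq p^{-1}(X)\subseteq U$. The variety $X$ is smooth by Corollary \ref{Bertini usual}, and $Y_{0}\to X$ is a $G$-torsor. The key point is to control the singularities of $Y_{0}$. By Lemma \ref{Bertini torsor}, $p^{1}_{V}$ on $U$ has constant rank $r=M-\dim\text{Lie}(G)+1$, and $r>c$ because $n\geq\dim\text{Lie}(G)+2$. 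Theorem \ref{Bertini} then says $\text{Sing}(Y_{0})$ has dimension at most $M-r=\dim\text{Lie}(G)-1$, so its codimension in $Y_{0}$ is at least $n-\dim\text{Lie}(G)+1\geq 3$. This gives $R_{2}$ on $U$. When $G$ is étale, $\text{Lie}(G)=0$ and $Y_{0}$ is smooth. For $k$ infinite, genericity over $k$ means we can pick $k$-rational hyperplanes.

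Let $Y$ be the closure of $Y_{0}$ in $\mathbb{P}^{N}$, i.e.\ the full intersection of the corresponding $c$ hyperplane sections in $\mathbb{P}^{N}$ (degree-$e$ forms on $P(W^{\oplus s})$). Two things must hold. First, $Y$ must avoid $F$, so that it equals $Y_{0}$ and is projective, giving $Y/G=X$ projective. Second, $Y$ must be a complete intersection. By Lemma \ref{ignorar fechado} applied to the invariant forms on $P(W^{\oplus s})$, which generate $\mathcal{O}(e)$ there, the intersection with $F$ has dimension at most $\dim F-c$. This is negative once $\operatorname{codim}F>M-c=n$, which we arrange by taking $s$ large. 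Then $Y\subseteq U$ has dimension $n$, it is cut out by $c$ equations in $\mathbb{P}^{N_{0}}$ (a complete intersection in $P(W^{\oplus s})$, which is fine after applying Proposition \ref{propriedades intersecoes completas}), and it is $R_{2}$.

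For $Y(k)\neq\varnothing$, we fix a $k$-point $y_{0}\in U(k)$; one exists since $U$ is a dense open subset of projective space over an infinite field. We then run the pointed versions, Proposition \ref{Bertini pontuado} and Lemma \ref{ignorar fechado pontuado}, with the linear system $W_{m}$ of Lemma \ref{potencia boa}, passing to a tensor power $m$ of the invariant linear system so that its hypotheses (separation of points, condition (3)) hold. The main obstacle I anticipate is this compatibility step. We must simultaneously keep the invariant linear system unramified on $U/G$, apply the pointed Bertini results on the torsor $U$ (where the rank is only $M-\dim\text{Lie}(G)+1$), and guarantee that the hypersurfaces avoid $F$. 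Checking hypothesis (3) of Proposition \ref{Bertini pontuado} for pulled-back invariant sections at points of $U$ in the $G$-orbit of $y_{0}$ needs care, since those points can never be separated from $y_{0}$ by invariant sections. I would first try replacing $\{x_{0}\}$ by the orbit of $y_{0}$ and working on $X$ with the point $p(y_{0})$.
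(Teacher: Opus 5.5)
Your proof follows the paper's approach except for the rational point. The skeleton is the same: the free locus of $G$ on a large multiple of the regular representation, an embedding of the quotient, Lemma \ref{Bertini torsor} for the rank $\dim U-\dim\text{Lie}(G)+1$ on the torsor, Theorem \ref{Bertini} for the $R_{2}$ bound, and Lemma \ref{ignorar fechado} to push the intersection off the non-free locus.

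Two small corrections:
\begin{itemize}
\item The ambient $\mathbb{P}^{N}$ of the statement should be $P(W^{\oplus s})$ itself, with $Y$ cut out by $c$ invariant forms of degree $e$. The Veronese re-embedding is unnecessary, and the image of $Y$ need not be a complete intersection in its span.
\item Your heuristic for the codimension of the non-free locus is too quick. A point of projective space is stabilized by $H$ when $H$ acts on the line through a character, so eigenspaces matter, not $W^{H}$. Also, for non-reduced $G$ the subgroups can form positive-dimensional families. The bound $\dim(\mathbb{P}(R^{r})\setminus U)\le N-r$ is nonetheless standard, and the paper cites it.
\end{itemize}

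The genuine gap is $Y(k)\neq\varnothing$, which you leave as a plan. It cannot be dodged. A $k$-point of a dense open subset of $V^{c}$ may give a $Y$ with no $k$-points. Imposing $y_{0}\in Y$ confines you to the proper linear subspace $W_{m}^{c}$, on which all the generic conditions coming from $V^{c}$ may fail. Your suggested remedy is exactly what the paper does, and it closes without any pointed statement on $U$.

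\begin{itemize}
\item Work first on the quotient $X^{\circ}=U/G$ with $x_{0}=p(y_{0})$. There the embedding separates points and has maximal rank. Pass to the tensor power of Lemma \ref{potencia boa}; its part (2) preserves ranks, so unramifiedness and Lemma \ref{Bertini torsor} survive. Proposition \ref{Bertini pontuado} then gives, for generic $v\in W_{m}^{c}$, a $Z\ni x_{0}$ of dimension $n$ with $Z\cap X^{\circ}$ smooth.
\item Lemma \ref{ignorar fechado pontuado} keeps $Z$ inside $X^{\circ}$. Its generation hypothesis follows from separation of points on the whole quotient, as in part (3) of Lemma \ref{potencia boa}.
\item On the torsor, put $F_{0}=p^{-1}(x_{0})$ and apply the unpointed Theorem \ref{Bertini} to $W_{m}$ on $U\setminus F_{0}$. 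For $y\notin F_{0}$, take the section $v_{0}^{\otimes m}$ of part (4) of Lemma \ref{potencia boa} at $p(y)$. It has vanishing $1$-jet at $p(y)$, so its pullback has vanishing $1$-jet at $y$. Hence it lies in $\ker p^{1}_{V^{\otimes m}}(y)$ but not in the hyperplane $W_{m}$. Therefore $W_{m}$ has the same rank $\dim U-\dim\text{Lie}(G)+1$ as $V^{\otimes m}$ there. This gives $\dim\text{Sing}(Y\setminus F_{0})\le\dim\text{Lie}(G)-1$.
\item The orbit points that invariant sections cannot separate from $y_{0}$ are simply absorbed. $F_{0}$ is finite, so $\text{Sing}(Y)\subseteq\text{Sing}(Y\setminus F_{0})\cup F_{0}$ still has dimension at most $\dim\text{Lie}(G)-1$ when $\dim\text{Lie}(G)\geq 1$. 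For \'etale $G$, $Y\to Z$ is \'etale and $Z$ is smooth.
\end{itemize}
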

\begin{proof}
    Let $R\defeq \mathcal{O}(G)$ be the regular representation of $G$ and $d\defeq \dim R$. Consider the action of $G$ on $\mathbb{P}(R^{r})\cong \mathbb{P}^{N}$ for some $r>0$, where $N=dr-1$. The free-locus $U\subseteq \mathbb{P}(R^{r})$ of this action is an open subscheme such that $\dim(\mathbb{P}(R^{r})\setminus U)\leq N-r$ (See \cite[Lemma 4.2.2]{AST_1979__64__87_0} and \cite[Lemma 3.6.]{kothari2024arbitrarilylargejumpsrham}). Now, the quotient $X\defeq \mathbb{P}^{N}/G$ is a projective variety (see \cite[Exposé V, Théorème 4.1.]{SGA3}) and the quotient $X^{\circ}\defeq U/G$ is a smooth open subscheme of $\mathbb{P}^{N}/G$. Moreover, the quotient $p\colon \mathbb{P}^{N}\to X$ is a $G$-torsor when restricted to $U$. Since $k$ is infinite, we can fix a point $y_{0}\in U(k)$. Let $x_{0}\defeq p(y_{0})\in X^{\circ}(k)$ and fix a closed immersion $i\colon X\hookrightarrow \mathbb{P}^{M}$. Let $V\defeq H^{0}(\mathbb{P}^{M},\mathcal{O}(1))$ and $L\defeq i^{\ast}\mathcal{O}(1)$. Take $r>0$ sufficiently large so that $r\geq n+1$ and $c=N-n>0$.
    \begin{enumerate}
        \item Consider the morphism $\varphi_{X^{\circ}}\colon V\to H^{0}(X^{\circ},L)$. Since $i$ is an immersion, the image of $V\otimes \mathcal{O}_{X^{\circ}}$ in $P^{1}_{X^{\circ}/k}(L)$ has maximal rank $N+1$. By Lemma \ref{potencia boa}, upon substituting $V$ and $L$ by some power, we can suppose that $V$ and $W\defeq \ker(V\to L(x_{0}))$ satisfy the hypothesis of Proposition \ref{Bertini pontuado} on $X^{\circ}$. Thus, we get that for a generic $v\in W^{c}$, the closed subscheme $Z\defeq V(\varphi_{X}(v_{1}),\dots,\varphi_{X}(v_{c}))$ of $X$ contains $x_{0}$, has dimension $N-c=n$ and the singular locus of $Z\cap X^{\circ}$ has dimension $N-(N+1)=-1$, i.e $Z\cap X^{\circ}$ is smooth. 
        \item Consider then $\varphi_{X}\colon V\to H^{0}(X,L)$. By applying Lemma \ref{ignorar fechado pontuado} to $\varphi_{X}$ and the open immersion $X^{\circ}\subseteq X$, we have for a generic $v\in W^{c}$ that
        \begin{align*}
        \dim(Z\cap (X\setminus X^{\circ})) &\leq \dim(X\setminus X^{\circ})-c=\dim(\mathbb{P}^{N}\setminus U)-c
        \\
        &\leq N-r-c=n-r\leq -1.
        \end{align*}
        Then $Z\subseteq X^{\circ}$. So $Z$ is a smooth complete intersection of dimension $n$ contained in $X^{\circ}$ such that $x_{0}\in Z(k)$.
        \item Consider the morphism $\psi\colon V\to H^{0}(\mathbb{P}^{N},p^{\ast}L)$. Let $Y\defeq p^{-1}(Z)=V(\psi(v_{1}),\dots, \psi(v_{c}))$. By construction, $y_{0}\in Y(k)$, $G$ acts freely on $Y$ and $Y$ is a complete intersection of dimension $n$ in $\mathbb{P}^{N}$. It remains to show that $Y$ is $R_{2}$. If $\dim\text{Lie}(G)=0$, $p$ is étale and therefore, $Y$ is also smooth. So suppose that $\dim\text{Lie}(G)>0$. Let $F\defeq p^{-1}(x_{0})$ and consider the morphisms 
        \[
        \psi_{U}\colon V\to H^{0}(U,p^{\ast}L)
        \]
        and
        \[
        \psi_{U\setminus F}\colon W\to H^{0}(U\setminus F,p^{\ast}L)
        \]
        This morphism satisfies the hypothesis of Theorem \ref{Bertini}. We would like to prove that
        \[
        W\otimes \mathcal{O}_{U\setminus F}\to P^{1}_{(U\setminus F)/k}(p^{\ast}L)
        \] 
        has constant rank equal to $N-\dim\text{Lie}(G)+1$. By Lemma $\ref{Bertini torsor}$,
        \[
        V\otimes \mathcal{O}_{U}\to P^{1}_{U/k}(p^{\ast}L)
        \]
        has constant rank $N-\dim\text{Lie}(G)+1$. As in the proof of Proposition \ref{Bertini pontuado},
        \[
        W\otimes \mathcal{O}_{U\setminus F}\to P^{1}_{(U\setminus F)/k}(p^{\ast}L)
        \] 
        also has constant rank $N-\dim\text{Lie}(G)+1$. Thus, for a generic $v\in W^{c}$, we have
        \[
        \dim\text{Sing}(Y\setminus F)\leq N-(N-\dim\text{Lie}(G)+1)=\dim\text{Lie}(G)-1.
        \]
        Since $\text{Sing}(Y)\subseteq \text{Sing}(Y\setminus F)\cup F$, we have
        \[
        \dim\text{Sing}(Y)\leq \max\{\dim\text{Lie}(G)-1,\dim(F)\}=\dim\text{Lie}(G)-1.
        \]
        We then obtain
        \[
        \text{codim}_{Y}(\text{Sing}(Y))\geq n-\dim\text{Lie}(G)+1\geq 3.
        \]
        So $Y$ is an $R_{2}$ complete intersection containing $y_{0}\in Y(k)$ in which $G$ acts freely with quotient $Z$.
    \end{enumerate}
    Since $k$ is infinite, we can find a $v\in W^{c}$ satisfying all of these generic properties and we are done.
\end{proof}

With this construction in hand, we are able to prove the main realization result for fundamental group schemes.

\begin{theorem}\label{fundamental group}
    Let $k$ be an infinite perfect field, $G/k$ a finite group scheme and $n\geq \dim(\text{Lie}(G))+2$ an integer. Then there exists a smooth projective connected variety $X/k$ of dimension $n$ and $x\in X(k)$ such that
    \[
    \pi_{1}^{S}(X,x)\cong\pi_{1}^{EN}(X,x)\cong\pi_{1}^{N}(X,x)\cong G.
    \]
\end{theorem}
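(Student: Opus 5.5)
We plan to combine Proposition \ref{Godeaux--Serre normal} with Propositions \ref{intersecao completa normal é Nori simplesmente conexa} and \ref{intersecao completa normal é S simplesmente conexa}, through a short exact sequence of fundamental group schemes attached to a torsor.

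First, apply Proposition \ref{Godeaux--Serre normal} to $G$ and $n$. This gives an $R_{2}$ complete intersection $Y\subseteq\mathbb{P}^{N}$ of dimension $n\geq 2$ with a point $y_{0}\in Y(k)$ and a free $G$-action. The quotient $X\defeq Y/G$ is smooth and projective of dimension $n$, and we set $x\defeq q(y_{0})\in X(k)$, where $q\colon Y\to X$ is the $G$-torsor. Connectedness of $X$ follows from the geometric connectedness of $Y$ (Proposition \ref{propriedades intersecoes completas}). Since $N=dr-1$ is large, we have $N\geq 3$, and the propositions of Section \ref{3} apply to $Y$. They give
\[
\pi_{1}^{N}(Y,y_{0})=\pi_{1}^{EN}(Y,y_{0})=\pi_{1}^{S}(Y,y_{0})=1.
\]
In characteristic zero the $S$ and $EN$ statements still hold: there numerically flat bundles on the normal variety $Y$ reduce, via the étale Lefschetz theorem and the comparisons in characteristic zero, to $\pi_{1}^{N}=\pi_{1}^{\text{ét}}$. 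If necessary we handle this case separately, using $\pi^{S}_1=\pi^{EN}_1=\pi^N_1$ for $X$ smooth in characteristic $0$ when $\pi_1^N$ is finite.

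Second, we need the exactness of
\[
\pi_{1}^{\ast}(Y,y_{0})\to \pi_{1}^{\ast}(X,x)\to G\to 1
\]
for $\ast\in\{N,EN,S\}$, where the map to $G$ is the classifying map of the torsor $q$. This map is surjective because $Y$ is connected and reduced, so $q$ is not induced from any proper subgroup scheme. The kernel is the image of $\pi_{1}^{\ast}(Y,y_{0})$. For Nori's group this is standard for finite torsors, and it is the result used by Li--Wang in the étale case. Tannakianly, it amounts to the following: if $E\in\cat{NF}(X)$ (respectively $\cat{EN}(X)$, $\cat{EF}(X)$) has trivial pullback $q^{\ast}E$, then $E$ comes from a representation of $G$. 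This holds because $q_{\ast}q^{\ast}E\cong E\otimes q_{\ast}\mathcal{O}_{Y}$, and $H^{0}(Y,q^{\ast}E)$ carries a $G$-action by descent. Triviality of $q^{\ast}E$ gives $E\cong Y\times^{G}H^{0}(Y,q^{\ast}E)$ by fppf descent. We also need the converse: every subobject of $q^{\ast}E$ in the relevant category on $Y$ is trivial because the group of $Y$ is trivial, so the normality criterion for exact sequences of Tannakian groups (Esnault--Hai--Sun) is satisfied. Once exactness holds, the triviality of $\pi_{1}^{\ast}(Y,y_{0})$ forces $\pi_{1}^{\ast}(X,x)\cong G$ for all three theories simultaneously, compatibly with the surjections $\pi^{S}_1\twoheadrightarrow\pi^{EN}_1\twoheadrightarrow\pi^{N}_1$.

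The main obstacle is the exactness in the middle for $\pi^{S}$ and $\pi^{EN}$ when $Y$ is singular. Pulling back numerically flat bundles along the finite flat $q$ preserves numerical flatness, since nefness is preserved under finite surjective pullback. The Esnault--Hai--Sun criterion also requires the maximal trivial subobject of $q^{\ast}E$ to descend to a subobject of $E$. We plan to deduce this from $G$-equivariance of $H^{0}(Y,q^{\ast}E)\otimes\mathcal{O}_{Y}\subseteq q^{\ast}E$ together with flat descent, and we expect this step to need the most care.
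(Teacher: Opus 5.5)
Your proposal is correct and has the same architecture as the paper: Proposition \ref{Godeaux--Serre normal}, then triviality of $\pi_1^{\ast}(Y,y_0)$ from Section \ref{3}, then an exact sequence for the torsor $q\colon Y\to X$. The difference is in how you get the exact sequence.

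The paper cites the full short exact sequence $1\to\pi_1^{\ast}(Y,y)\to\pi_1^{\ast}(X,x)\to G\to 1$ for $\ast\in\{S,N\}$ from Antei et al.\ and Li--Wang. It then gets $\pi_1^{EN}$ from the surjections $\pi_1^{S}\twoheadrightarrow\pi_1^{EN}\twoheadrightarrow\pi_1^{N}$. You prove only right exactness, which is all that is needed once $\pi_1^{\ast}(Y,y_0)=1$. This makes your argument self-contained, at the cost of some Tannakian bookkeeping the paper outsources.

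The step you single out as the main obstacle is in fact vacuous. Since $\pi_1^{\ast}(Y,y_0)=1$, every object of the relevant category on $Y$ is trivial. So $q^{\ast}E$ is its own maximal trivial subobject and descends to $E$.

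You can even skip Esnault--Hai--Sun entirely. The functor $U\mapsto Y\times^{G}U$ from representations of $G$ into $\cat{EF}(X)\subseteq\cat{EN}(X)\subseteq\cat{NF}(X)$ is fully faithful because $H^{0}(Y,\mathcal{O}_{Y})=k$. It is essentially surjective onto each category, for three reasons:
\begin{itemize}
\item $q^{\ast}E$ lies in the corresponding category on $Y$ (nefness is preserved by pullback along any morphism, not only finite surjective ones), hence is trivial;
\item a trivial bundle with descent datum is $H^{0}(Y,q^{\ast}E)\otimes\mathcal{O}_{Y}$;
\item the descent datum is a $G$-action on $H^{0}(Y,q^{\ast}E)$, because $H^{0}(Y\times G,\mathcal{O})=\mathcal{O}(G)$.
\end{itemize}
So each functor is a tensor equivalence compatible with the fibre functors at $x$ and $y_0$.

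The one loose point is characteristic $0$, where the $S$-proposition of Section \ref{3} is not available. Your sketch via ``numerically flat bundles on the normal variety $Y$'' is not a proof.

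The clean fix, which is what the paper does, runs as follows. By Cartier's theorem $G$ is \'etale in characteristic $0$, so $Y$ is smooth by the last assertion of Proposition \ref{Godeaux--Serre normal}. Then $\pi_1^{\text{\'et}}(Y,y_0)=1$ by Grothendieck--Lefschetz, and $\pi_1^{S}(Y,y_0)=\pi_1^{N}(Y,y_0)=1$ by Esnault--Mehta. Finally $\pi_1^{EN}(Y,y_0)=1$ as a quotient of $\pi_1^{S}$.

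Your fallback, comparing $\pi_1^{S}$ and $\pi_1^{N}$ on the smooth quotient $X$ once $\pi_1^{N}(X,x)\cong G$ is known, also works. However, it needs its own justification, for instance via Demailly--Peternell--Schneider together with $H^{1}(\tilde X,\mathcal{O})=0$ on the universal cover.
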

\begin{proof}
    Since we have
    \[
    \pi^{S}_{1}(X,x)\twoheadrightarrow \pi^{EN}_{1}(X,x)\twoheadrightarrow \pi^{N}_{1}(X,x),
    \]
    if $\pi^{S}_{1}(X,x)\cong \pi^{N}_{1}(X,x)\cong G$ with $G/k$ finite, then $\pi^{EN}_{1}(X,x)\cong G$. Thus, it is enough to prove the statement for the $S$ and Nori fundamental group schemes. By \cite[Corollary I and Theorem IV]{Antei2019-og} (see also \cite[Theorem 4.7.]{li2026varietiesprescribedfundamentalgroup}), for a $G$-torsor $Y\to X$, with $X$ and $Y$ geometrically reduced connected proper schemes over $k$, and a point $y\in Y(k)$ with image $x\in X(k)$, we have the short exact sequence 
    \[
    1\to \pi_{1}^{\ast}(Y,y)\to \pi_{1}^{\ast}(X,x)\to G\to 1,
    \]
    for $\ast\in\{S, N\}$. Let $Y\to X$ and $y\in Y(k)$ be as given by Proposition \ref{Godeaux--Serre normal}. We will show that $\pi_{1}^{\ast}(Y,y)=1$ for $\ast\in\{S, N\}$. By \cite[Propositions 5.8.(4) and 5.14.(5)]{li2026basechangefundamentalgroup}, it is enough to show this after passing to the algebraic closure. If the characteristic of $k$ is zero, then $G/k$ is étale and $Y$ is smooth. Since $Y$ is a complete intersection of dimension $\geq 2$ in projective space, we have $\pi^{ét}_{1}(Y,y)=1$ by Grothendieck-Lefschetz. Thus, we have
    \[
    \pi_{1}^{S}(Y,y)=\pi_{1}^{N}(Y,y)=1
    \]
    by \cite[Section 2]{EsnaultMehta2011}. If the characteristic of $k$ is positive, by Propositions \ref{intersecao completa normal é Nori simplesmente conexa} and \ref{intersecao completa normal é S simplesmente conexa}, we have
    \[
    \pi_{1}^{S}(Y,y)=\pi_{1}^{N}(Y,y)=1.
    \]
    The theorem then follows from the short exact sequence.
\end{proof}

\begin{remark}
    The infinitude assumption on the field in Proposition \ref{Godeaux--Serre normal} (and hence in Theorem \ref{fundamental group}) is used to choose rational points on dense open sets of the parameter spaces. Nevertheless, we expect that Poonen's Bertini theorems over finite fields (\cite{Poonen2004-ei}) can be adapted to our setting, obtaining the analogous results of Section \ref{4} for finite fields. We do not pursue this direction here.
\end{remark}

\printbibliography
\end{document}